\let\NAT@parse\undefined
\newtheorem{theorem}{Theorem}
\newtheorem{example}{Example}
\newtheorem{assumption}{Assumption}
\newtheorem{corollary}{Corollary}
\newtheorem{lemma}{Lemma}
\title{\LARGE \bf 
Q-linear Convergence of Distributed Optimization with Barzilai-Borwein Step Sizes.
}
\author{Iyanuoluwa Emiola 
\thanks{The author is with the Electrical \& Computer Engineering Department at the University of Central Florida, Orlando FL 32816, USA. Email: \texttt{iemiola@knights.ucf.edu }}%
}
\begin{document}
\maketitle


\begin{abstract}

The growth in sizes of large-scale systems and data in machine learning have made distributed optimization a naturally appealing technique to solve decision problems in different contexts. In such methods, each agent iteratively carries out computations on its local objective using information received from its neighbors, and shares relevant information with neighboring agents. Though gradient-based methods are widely used because of their simplicity, they are known to have slow convergence rates. On the other hand, though Newton-type methods have better convergence properties, they are not as applicable because of the enormous computation and memory requirements. In this work, we introduce a distributed quasi-Newton method with Barzilai-Borwein step-sizes. 
We prove a Q-linear convergence to the optimal solution, present conditions under which the algorithm is superlinearly convergent and validate our results via numerical simulations.
 \end{abstract}
 
 \section{Introduction}
Solving distributed optimization problems have different applications in different areas including distributed learning \cite{zeng2017energy,arjevani2015communication}. These problems are formulated in such a manner that agents in a network have to coordinate with other agents connected to them in a network to achieve a desired goal. In the distributed optimization problem below: 
\begin{equation}\label{eqn:original}
\min_x f(x) = \sum_{i=1}^n f_i(x),
\end{equation}
where $n$ represents the total number of agents in the network and the objective $f_i(\cdot)$ is only known by agent $i$. 

Typically, solutions to optimization or decision making problems over a network are approached using a gradient descent methods in dual domain or subgradient descent methods in the case of non-differentiable cost functions \cite{magnusson2016practical,lei2016primal,nedic2009distributed}. 
A major advantage of first-order methods, is the fact that their structure easily enables local computation for decision making in a distributed manner, where agents do not need global information to solve their decision problems via local computation. This simplicity of the computations associated with gradient methods make them amenable to distribute and parallelize in large-scale, computation-intensive problems such as machine learning \cite{recht2013parallel,lian2015asynchronous}. 
However, applying these gradient-based algorithms to large-scale problems face several challenges and become impractical due to their well-known slow convergence rates \cite{goffin1977convergence,schmidt2011convergence}.
To address the slow convergence rates of first order methods, second-order (Newton-type) methods have been proposed \cite{schraudolph2007stochastic,izmailov2004newton}. Even though Newton-type methods result in quadratic convergence, they also have a significant computational overhead from the need to invert and store the Hessian of the cost function. This computational burden makes them not suitable for large-scale systems and difficult to distribute, despite the efforts at adapting them for distributed implementation \cite{attouch2013convergence,zheng2014convergence}.

Quasi-Newton methods have been introduced as a way to leverage the fast convergence properties of second-order methods using the computation architecture of first-order methods. Quasi-Newton methods propose ways of incorporating the curvature information of the objective function from the second-order methods into the first-order approaches. Some examples include the BFGS \cite{eisen2017decentralized}, alongside its variations such as the low-memory BFGS \cite{mokhtari2015global}, the Barzilai-Borwein (BB) \cite{dai2005projected} and the DFP (Davidon - Fletcher - Powell method) \cite{pu2002convergence}, with different assumptions made on the cost function.

\subsection{Contributions} 
In this work, we present
     a fully distributed algorithm for solving an unconstrained optimization problem using uncoordinated BB step-sizes and obtain \textit{Q-linear} convergence when the cost function in Problem \eqref{eqn:original} is strongly convex. Furthermore, we show that for a class of objective functions, superlinear rate of convergence can be obtained. The results in this work are related to those in \cite{nedic2017achieving,nedic2017geometrically,dai2002r,dai2013new}.
     

The remainder of this section presents an overview of existing literature on convergence rates in distributed optimization and key results on convergence rates for distributed optimization using quasi-Newton methods. 
In Section \ref{sec:problemformulation}, the problem considered is presented and motivates the BB algorithm while Section \ref{sec:Convanalysiscentralized-BB} summarizes convergence results for the centralized BB case. Section \ref{sec:Dec-BB} presents the Distributed BB problem and algorithm and Section \ref{sec:convanadistributed} presents the convergence analysis of the D-BB algorithm -- the main results of this paper. The obtained results are illustrated and compared with other methods via numerical simulations in Section \ref{sec:Numerical}.

 \subsection{Literature Review}
Many decision problems in large-scale systems fit the paradigm for distributed optimization in which components or agents of the system locally iteratively solve  a part of an optimization problem and  exchange information with other (neighboring) agents in the network to arrive at a system-wide solution. Distributed decision problems are common in many areas including power systems, multi-agent systems, wireless sensor networks and have seen a recent surge in distributed machine learning, where server and worker nodes cooperate to solve learning problems as seen in \cite{li2014scaling}. 
Typically, in a network comprising $n$ agents, each agent has a (sometimes private) local objective function $f_i(x)$ and the goal is to optimize an aggregate function comprising the local objective functions of the agents $\sum_{i=1}^n f_i(x)$. In much of the literature, the local objective functions are usually assumed to be strongly convex and Lipschitz smooth.
 
 The literature on distributed optimization methods is rich and encompasses a wide range of methods that have been proposed to solve such problems including Alternating Direction Method of Multipliers (ADMM) dual averaging, gradient-based and Newton-type methods. While the ADMM framework can easily handle a broader class of functions e.g. nondifferentiable functions, can be parallelized and is easy to implement, it has a very poor convergence rate \cite{boyd2011distributed}. Dual averaging methods, on the other hand, in which agents keep estimates of the decision variable and exchange them with neighboring agents are known to perform even more poorly than ADMM \cite{agarwal2010distributed,duchi2011dual}.
 
Distributed gradient descent (DGD) methods and their variants, which attempt to combine the merits of the dual averaging and ADMM  are appealing and have been studied in the literature \cite{boyd2006randomized,nedich2001distributed,mateos2015distributed,nedic2008distributed,nedic2009distributed,assran2019stochastic,das2016distributed,berahas2018balancing}.
Applying the gradient-descent method to minimize strongly convex and smooth objective functions, is known, results in a linear rate of convergence with an appropriately chosen step size \cite{nesterov1998introductory}. 
To overcome the convergence rate limitations of gradient-based methods, second-order (Newton-type) algorithms have been proposed \cite{fischer1992special,polyak2007newton}. Though Newton-type methods have quadratic convergence rates, the computational and storage overhead incurred in inverting the Hessian is significant, particularly for large-scale problems that have a high number of variables especially in large distributed systems with multiple agents. Furthermore, to distribute the computation of Newton-type methods, positive-definiteness of the Hessian of the objective function is needed to ensure methods like matrix splitting are applicable \cite{jadbabaie2009distributed,wei2013distributed}. 

Even though constant or decaying step-sizes are commonly used in gradient-based and Newton-type methods, the \textit{Quasi-Newton} methods that leverage the computation structure of the gradient methods alongside the fast convergence properties of Newton-type methods have been studied, including methods like the Broyden-Fletcher-Goldfarb-Shanno (BFGS) algorithm \cite{eisen2017decentralized}, the Davidon-Fletcher-Powell \cite{pu2003revised} algorithm and the focus of this paper - the Barzilai-Borwein (BB) method first introduced in \cite{barzilai1988two}.  The central idea in the performance of these methods is to speed up convergence by exploiting the information from the inverse hessian without necessarily computing it explicitly. For example, Barzilai-Borwein quasi-Newton method computes step-sizes using the difference of successive iterates and the gradient evaluated at those iterates. One of the appealing properties of the BB method is the simple nature of the computations and updates involved, even in the distributed case as shown in this paper. 
 
This paper builds on earlier work on Distributed Gradient Descent (DGD) methods \cite{berahas2018balancing,jakovetic2014linear},   \cite{xin2018linear,nedic2017achieving,nedic2017geometrically}   as well as distributed Barzilai-Borwein methods \cite{dai2002r,dai2013new} where the authors analyze two-dimensional convex-quadratic functions. More recent efforts in \cite{gao2019geometric}, which took an adapt-then-combine strategy for agreement updates obtained a geometric rate of convergence.

In this paper, we propose a fully distributed algorithm that converges $Q$-linearly to the optimal solution. Furthermore, we show that a particular class of objective functions admit superlinear convergence with the modified Barzilai-Borwein algorithm presented in the paper.
The approach taken in this paper is applicable to strongly convex functions and we analyze the centralized and distributed cases where computation of the step-sizes are done in an uncoordinated manner.  In our approach, agents locally carry out computations, exchange information with neighboring agents to reach an agreement and use information obtained from other agents to continue the iterative process. 
Most of the notations used in this paper follow the norm in the literature and are briefly summarized next. 

\subsection{Notation}
Vectors and matrices are represented by boldface lower and upper case letters, respectively. We denote the set of positive and negative reals as $\mathbb{R}_+$ and $\mathbb{R}_-$, a vector or matrix transpose as $(\cdot)^T$, and the L$2$-norm of a vector by $||\cdot||$. The gradient of a function $f(\cdot)$ is denoted $\nabla f(\cdot)$.

\section{Problem formulation}\label{sec:problemformulation}

We consider the problem of the form below over a network of agents where their objective is to
\begin{equation}\label{eqn:quadratic}
\min_x f(x) = \sum_{i=1}^n f_i(x),
\end{equation}
In Problem \eqref{eqn:quadratic}, $f$ is strongly convex and smooth. Each agent $i$ in the network has access to a $f_i$ a component of $f$ and the agents collectively seek to optimize $f(x)$ by locally optimizing $f_i(x)$ iteratively. 

The communication graph of the multi-agent network is represented by an undirected weighted Graph $G = (\mathcal{V},\mathcal{E})$ in which $\mathcal{V} = {1,2, ... n}$ is the set of nodes (agents) and  $\mathcal{E}={(i,j)}$ is the set of edges such that agents $i,j$ are connected in the edge set, where $j\neq i$. The neighbors of agent $i$ is represented by the set $N_i = \{j: \ (i,j) \in \mathcal{E}\}$. Symmetry of the underlying graph implies that agents $i$ and $j$ for which $(i,j) \in \mathcal{E}$ means that information flows in both directions between both agents. 

The common approach to solve Problem \eqref{eqn:quadratic} is to use first-order methods, which involves updating the variable $x(k)$ iteratively  using the gradient of the cost function with the following equation:
\begin{equation}\label{eqn:gradient}
x(k+1) = x(k) - \alpha \nabla f(x(k)).
\end{equation}
It is well known that with an appropriate choice of the step size $\alpha$, the sequence $\{x(k)\}$ generated from Equation \eqref{eqn:gradient} converges to $x^*$.
%

The Newton method, which leverages curvature information of the cost function in addition to direction; and are known to speed up the convergence in the neighborhood of the optimal solution. The Newton-type methods have an update of the form:
\begin{equation}\label{eqn:newton}
x(k+1) = x(k) - \nabla f(x)(\nabla^2 f(x))^{-1}.
\end{equation}
Though they have good convergence properties, there are computational costs associated with building and computing the inverse hessian. In addition, some modification are needed if the hessian is not positive definite \cite{gill1972quasi}. 

\subsection{Barzilai-Borwein Quasi-Newton Method}\label{sec:about-BB}
The Barzilai-Borwein method differs from other quasi-Newton methods because it only uses one step size for the iteration as opposed to other quasi-Newton method that need approximations for the inverse of the hessian, thus, increasing the computation overhead. Problem \eqref{eqn:quadratic} is solved using the iterative scheme summarized in Algorithm \ref{alg:algorithm}, where a step-size $\alpha(k)$ is computed in the gradient descent method \eqref{eqn:gradient} so that $\alpha(k) \nabla f(x(k))$ approximates the $(\nabla^2 f(x(k)))^{-1} \nabla f(x(k))$ term in the Newton update \eqref{eqn:newton}.

Let
\[
s(k-1)\triangleq x(k)-x(k-1), \quad \text {and}
\]
\begin{equation}\label{eqn:8}
    y(k-1)=\nabla f(x(k))-\nabla f(x(k-1)). \nonumber
\end{equation}
The first BB step size is given by:
\begin{equation}\label{eqn:bb-step}
    \alpha_1(k)=\frac{s(k-1)^{T} s(k-1)}{s(k-1)^T y(k-1)}.
\end{equation}
Similarly, the second step size, $\alpha_2(k)$ is given by:
\begin{equation}\label{eqn:bb-step1}
     \alpha_2(k)=\frac{s(k-1)^{T} y(k-1)}{y(k-1)^T y(k-1)}.
\end{equation}
In general, there is flexibility in the choice to use $\alpha_1(k)$ or $\alpha_2(k)$ \cite{barzilai1988two}. In addition, both step sizes can be alternated within the same algorithm after a considerable amount of iterations to facilitate convergence. The procedure is summarized in Algorithm \ref{alg:algorithm}. 
\begin{algorithm}[h]
\caption{Algorithm for Centralized BB}
\label{alg:algorithm}
\begin{algorithmic}[1]
\Statex \textbf{Initialize}: $\alpha_1(0), x(0), \nabla f(x(0)), \varepsilon$.
\While{$\|\nabla f(x(k))\| \geq \varepsilon$}
\State Compute 
\[
\alpha_1(k)=\frac{s(k-1))^{T} s(k-1)}{s(k-1)^T y(k-1)}
\]{\small \Comment{$\alpha_2$ in Equation \eqref{eqn:bb-step1} may also be used}}
\State Update $x(k+1)=x(k)-\alpha_1(k)\nabla f(x(k))$\label{iterativecentralized1}
\EndWhile
\end{algorithmic}
\end{algorithm}

Before proceeding with the distributed BB algorithm and its convergence analyses, we first present a convergence analysis of the centralized case, where the following assumptions are made about Problem \eqref{eqn:quadratic} and Algorithm \ref{alg:algorithm}.
\begin{assumption}\label{assume-bounded-G}
The decision set $\mathcal{X}$ is bounded. This implies that there exists some constant $0 \leq B < \infty$ such that $|\mathcal{X}|\leq B$.
\end{assumption}
\begin{assumption}\label{Assumption1}
The objective function $f(x)$ in Problem \eqref{eqn:quadratic} is strongly convex and twice differentiable.
This implies that for $x,y \in \mathbb{R}^{np}$, there exists $\mu>0$ such that:
\begin{equation*}
    f_i(x)\geq f_i(y)+\nabla f_i(y)^{T}(x-y)+\frac{\mu}{2}\|x-y\|^2.
\end{equation*}
\end{assumption}
\begin{assumption}\label{Assumption3}
The inner product between the iterates deviations, $s$, and the gradient deviations, $y$, is strictly positive for all time step $k$. We make this assumption in both the centralized and distributed case.
\end{assumption}
\begin{assumption}\label{Assumption4}
  The gradient of the objective function $\nabla f$ is Lipschitz continuous. This implies that for all $x$ and $y$, there exists $L>0$ such that:
\begin{equation*}
   \|\nabla f(x)-\nabla f(y)\|\leq L\|x-y\|.
\end{equation*}
\end{assumption}

The assumptions above are typical in the literature for distributed optimization problems; and in fact, Assumption \ref{Assumption3} was made in \cite{gao2019geometric} as well.

\section{Convergence Analysis of Centralized BB}\label{sec:Convanalysiscentralized-BB}
In this section, we present a convergence analysis using the two BB step sizes for Problem \eqref{eqn:quadratic} where strong convexity of the cost function is assumed . 
\subsection{Convergence Analysis with First Step Size} \label{sec:convanalysisfirststepsize}
\begin{lemma}\label{lemmacentralized1}
Consider Algorithm \ref{alg:algorithm} for Problem \eqref{eqn:quadratic} and let Assumptions \ref{assume-bounded-G}, \ref{Assumption1},  \ref{Assumption3} and \ref{Assumption4} hold. In addition, let the Lipschitz continuity constant for $\nabla f(\cdot)$ and strong convexity parameter for $f(\cdot)$ satisfy $\mu\leq L$. If $\alpha_1(k)$ in Equation \eqref{eqn:bb-step} is such that $1/L \leq \alpha_1(k) \leq 2/(\mu + L)$, the iterates generated from Algorithm \ref{alg:algorithm} converge Q-Linearly to the optimal point $x^*$.
\end{lemma}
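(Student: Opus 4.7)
The plan is to treat this as a standard contraction analysis for gradient descent with a variable step size, exploiting the fact that the stated bounds on $\alpha_1(k)$ ensure each iterate shrinks the distance to $x^*$ by a factor strictly less than one. First I would write the update as
\begin{equation*}
x(k+1) - x^* = x(k) - x^* - \alpha_1(k)\bigl(\nabla f(x(k)) - \nabla f(x^*)\bigr),
\end{equation*}
using $\nabla f(x^*)=0$. By twice differentiability (Assumption \ref{Assumption1}) and the fundamental theorem of calculus, $\nabla f(x(k)) - \nabla f(x^*) = H(k)\,(x(k)-x^*)$, where $H(k) \triangleq \int_0^1 \nabla^2 f\bigl(x^* + t(x(k)-x^*)\bigr)\,\mathrm{d}t$. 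Strong convexity with parameter $\mu$ and Lipschitz continuity of $\nabla f$ with constant $L$ (Assumption \ref{Assumption4}) yield $\mu I \preceq H(k) \preceq L I$, so the iteration becomes the linear recursion $x(k+1)-x^* = \bigl(I-\alpha_1(k) H(k)\bigr)(x(k)-x^*)$.

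The next step is to bound the spectral norm of $I-\alpha_1(k) H(k)$. Since the eigenvalues of $H(k)$ lie in $[\mu,L]$, the eigenvalues of $I-\alpha_1(k)H(k)$ lie in $[1-\alpha_1(k)L,\, 1-\alpha_1(k)\mu]$, and therefore $\|I-\alpha_1(k)H(k)\| \leq \max\bigl(|1-\alpha_1(k)L|,\,|1-\alpha_1(k)\mu|\bigr)$. I would then verify that for every $\alpha_1(k)$ in the interval $[1/L,\,2/(\mu+L)]$ this maximum is bounded uniformly by the constant
\begin{equation*}
c \triangleq \frac{L-\mu}{L+\mu} < 1,
\end{equation*}
by checking the two endpoints of the $\alpha_1(k)$-interval (where the maximum is attained at one of the two endpoints of the spectrum of $H(k)$) and invoking monotonicity in between. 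The hypothesis $\mu \leq L$ together with strong convexity ($\mu>0$) makes $c$ well-defined and strictly less than $1$.

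Chaining the bound across iterations gives $\|x(k+1)-x^*\| \leq c\,\|x(k)-x^*\|$, which is exactly Q-linear convergence with rate $c$. Assumption \ref{assume-bounded-G} guarantees the iterates stay in a bounded region so the Hessian integral $H(k)$ is well-defined, and Assumption \ref{Assumption3} is not directly required for the convergence chain itself but is needed implicitly to ensure that the BB step $\alpha_1(k)$ in \eqref{eqn:bb-step} is positive and finite so that the hypothesis $1/L \leq \alpha_1(k) \leq 2/(\mu+L)$ is meaningful. The main obstacle I anticipate is the last bookkeeping point: establishing cleanly that the maximum $\max(|1-\alpha L|,|1-\alpha\mu|)$ over $\alpha \in [1/L,\,2/(L+\mu)]$ equals $(L-\mu)/(L+\mu)$ rather than a larger value, since the function $\alpha \mapsto |1-\alpha L|$ changes sign inside the interval at $\alpha=1/L$; handling this requires a short case analysis on which of $1-\alpha L$ and $1-\alpha\mu$ has the larger absolute value as $\alpha$ varies, but the computation is elementary.
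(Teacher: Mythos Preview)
Your approach is correct in strategy and genuinely different from the paper's. The paper does not use the Hessian integral representation at all; instead it expands $\|x(k+1)-x^*\|^2$ directly, invokes the co-coercivity inequality for strongly convex functions with Lipschitz gradients (Theorem~2.1.12 in Nesterov), and obtains $\|x(k+1)-x^*\|^2 \leq (1-\alpha_1(k)c_2)\|x(k)-x^*\|^2$ with $c_2 = 2\mu L/(\mu+L)$, then lower-bounds $\alpha_1(k)$ by $1/L$ to arrive at the rate $\sqrt{(L-\mu)/(L+\mu)}$. Your spectral argument is more elementary in that it avoids quoting Nesterov's inequality and instead makes direct use of the twice-differentiability hypothesis in Assumption~\ref{Assumption1}, which the paper assumes but never actually exploits in its own proof.

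There is one computational slip in your bookkeeping, precisely at the point you flagged as the main obstacle. The uniform contraction factor over $\alpha\in[1/L,\,2/(\mu+L)]$ is not $(L-\mu)/(L+\mu)$. On that interval the inequality $\alpha\leq 2/(\mu+L)$ is equivalent to $1-\alpha\mu\geq \alpha L-1$, so $|1-\alpha\mu|\geq|1-\alpha L|$ throughout and the maximum is simply $1-\alpha\mu$. This is largest at the left endpoint $\alpha=1/L$, giving $(L-\mu)/L$; the value $(L-\mu)/(L+\mu)$ is the \emph{minimum} of the contraction factor over the interval, attained at $\alpha=2/(\mu+L)$. With the corrected constant $c=(L-\mu)/L=1-\mu/L$ your chain $\|x(k+1)-x^*\|\leq c\,\|x(k)-x^*\|$ goes through unchanged, and this rate is in fact still sharper than the paper's $\sqrt{(L-\mu)/(L+\mu)}$, since $(1-\mu/L)^2\leq (L-\mu)/(L+\mu)$ whenever $\mu\leq L$.
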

%
%
\begin{proof}

From Equation (\ref{eqn:gradient}), we first consider $\|x(k+1)-x^*\|^2$ to obtain bounds for convergence. First, we let $g(k)=\nabla f(x(k))$ and we obtain that:
\begin{equation*}
  \|x(k+1)-x^*\|=\|x(k)-x^*-\alpha_1(k)g(k)\|.
\end{equation*}
By squaring both sides, we have:
\begin{equation}
\begin{aligned}
\|x(k) {-} x^*{-}\alpha_1(k)g(k)\|^2 ={}  &\|x(k) {-} x^*\|^2 {+} \alpha_1^2(k)\|g(k)\|^2, \\
 & -2\left(x(k){-}x^*\right)^T\left(\alpha_1(k) g(k)\right). 
\end{aligned}
\end{equation}
Using the fact that for all vectors $a,\ b$, $2a^{T}b\leq \|a\|^2+\|b\|^2$.
So we obtain the relationship:
\begin{equation*}
2\left(x(k) - x^*\right)^T\left(g(k)\right) \leq \|g(k)\|^2 + \|x(k) - x^*\|^2.
\end{equation*}
By using strong convexity, where $\mu$ and $L$ are strong convexity and Lipschitz parameters respectively and $c_1$, $c_2$ are given by $c_1=2/(\mu +L)$ and $c_2=(2\mu L)/(\mu +L)$, we obtain:
\begin{equation}
\begin{aligned}
\|x(k) {-} x^*-\alpha_1(k)g(k)\|^2 \leq{}  &\|x(k) {-} x^*\|^2 {+} \alpha_1^2(k)\|g(k)\|^2 \\
 & -\alpha_1(k)c_1\|g(k)\|^2 \\
 & - \alpha_1(k)c_2\|x(k) {-} x^*\|^2, \\
 \leq  &(1-\alpha_1(k)c_2)\|x(k) {-} x^*\|^2\\ 
 &+(\alpha_1^2(k){-}\alpha_1(k)c_1)\|g(k)\|^2,\\
 \leq  &(1-\alpha_1(k)c_2)\|x(k) {-} x^*\|^2.
 \label{qlinearrate}
\end{aligned}
\end{equation}
and the last inequality is due to Theorem $2.1.12$ from chapter $2$ of \cite{nesterov1998introductory}. We also note that in the previous inequality, the term $(\alpha_1^2(k)-\alpha_1(k)c_2)\|g(k)\|^2\leq 0$ provided $\alpha_1(k)\leq c_1$.
%
We establish that the step size $\alpha_1(k)=c_1$ indeed is within the range of the BB step size bounds; that is, 
\begin{equation*}
    \frac{1}{L} \leq c_1 \leq \frac{1}{\mu},
\end{equation*}
and refer readers to Appendix \ref{Proof of Cor No11} for details.

Therefore the BB convergence can be analysed as:
\begin{eqnarray}
\|x(k+1) - x^*\|^2 &\leq& \left(1-\alpha_1(k)c_2\right)\|x(k) - x^*\|^2,\nonumber\\
 \dfrac{\|x(k+1) - x^*\|^2}{\|x(k) - x^*\|^2} &\leq& 1-\alpha_1(k)c_2,\nonumber\\
\text{and} \quad \dfrac{\|x(k+1) - x^*\|}{\|x(k) - x^*\|} &\leq& \left(1-\alpha_1(k)c_2\right)^{\frac{1}{2}}. \nonumber
\end{eqnarray}
 We will now analyse the right hand side of the above equation by bounding $\left(1-\alpha_1(k)c_2\right)^{\frac{1}{2}}$. The first Barzilai-Borwein step size $\alpha_{1}(k)$ is given by:
\begin{eqnarray}
\alpha(k) = \dfrac{\|s(k-1)\|^2}{\left[x(k) - x(k-1)\right]^T\left[\nabla f\left(x(k)\right) - \nabla f\left(x(k-1)\right)\right]}.\nonumber
\end{eqnarray}
By using Lipschitz continuity of $\nabla f(\cdot)$, with $L$ as the Lipschitz constant, we obtain the lower bound of the first BB step size as:
\begin{eqnarray}
\alpha_1(k) > \dfrac{\|x(k) - x(k-1)\|^2}{L\|x(k) - x(k-1)\|^2} = \dfrac{1}{L}.\nonumber
\end{eqnarray}
If $\alpha_1(k)$ and $c_2$ are positive and $\alpha_1(k) > 1/L$, then
 $ -\alpha_1(k)c_2 < -c_2/L$.
 Since $\alpha_1(k) > 1/L$,
it implies that $ 0 < 1-\alpha_1(k)c_2 < 1-c_2/L$. So we obtain the bound:
\begin{eqnarray}
 0 < \left(1-\alpha_1(k)c_2\right)^{\frac{1}{2}} < \left(1-\dfrac{c_2}{L}\right)^{\frac{1}{2}}.\nonumber
\end{eqnarray}
We will now show that $c_2/L<1$. If $c_2=2\mu L/(\mu +L)$, then it implies that $c_2/L=2\mu/(\mu +L)$.
If $\mu < L$, then it implies that $\mu +\mu < L+\mu$ and we obtain the fact that $2\mu/(\mu +L)<1$.

Therefore we obtain the relationship: $$\lim_{k\to\infty}\quad \dfrac{\|x(k+1) - x^*\|}{\|x(k) - x^*\|} <\left(1-\dfrac{c_2}{L}\right)^{\frac{1}{2}}< 1,$$
from which we conclude that the iterates $x(k)$ converge $Q$-linearly to the optimal point, $x^*$.
\end{proof}
\subsection{Convergence Analysis of Centralized BB with Second Step Size}\label{sec:convanalysissecondstepsize}
Here, we state a similar result to Lemma \ref{lemmacentralized1} using the second BB step size in Equation \eqref{eqn:bb-step1}.
\begin{lemma}\label{lemmacentralized2}
Consider Algorithm \ref{alg:algorithm} for Problem \eqref{eqn:quadratic} and let Assumptions \ref{assume-bounded-G}, \ref{Assumption1},  \ref{Assumption3} and \ref{Assumption4} hold. In addition, let the Lipschitz continuity constant for $\nabla f(\cdot)$ and strong convexity parameter for $f(\cdot)$ satisfy $\mu\leq L$. If $\alpha_2(k)$ in Equation \eqref{eqn:bb-step1} is such that $1/L \leq \alpha_2(k) \leq 2/(\mu + L)$, the iterates generated from Algorithm \ref{alg:algorithm} converge Q-Linearly to the optimal point $x^*$.
\end{lemma}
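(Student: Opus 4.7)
The plan is to mirror the structure of the proof of Lemma \ref{lemmacentralized1}, adapting the algebra to the second step size. I would start from the iterate update $x(k+1) = x(k) - \alpha_2(k) g(k)$ with $g(k) = \nabla f(x(k))$, take the squared distance to $x^*$, and expand to obtain
\begin{equation*}
\|x(k+1)-x^*\|^2 = \|x(k)-x^*\|^2 + \alpha_2^2(k)\|g(k)\|^2 - 2\alpha_2(k)(x(k)-x^*)^T g(k).
\end{equation*}
Invoking the strong convexity / Lipschitz bound from Theorem~2.1.12 of Nesterov exactly as in Lemma \ref{lemmacentralized1}, and using that $\alpha_2(k) \leq c_1 = 2/(\mu+L)$ makes the gradient-norm coefficient non-positive, I would arrive at the contraction
\begin{equation*}
\|x(k+1)-x^*\|^2 \leq (1 - \alpha_2(k) c_2)\|x(k)-x^*\|^2,
\end{equation*}
with $c_2 = 2\mu L/(\mu+L)$, so that the argument bounding $c_2/L < 1$ and passing to the ratio $\|x(k+1)-x^*\|/\|x(k)-x^*\|$ carries over verbatim once a suitable lower bound on $\alpha_2(k)$ is established.

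The distinguishing step, and the one that differs meaningfully from Lemma \ref{lemmacentralized1}, is the lower bound $\alpha_2(k) \geq 1/L$. Writing
\begin{equation*}
\alpha_2(k) = \frac{s(k-1)^T y(k-1)}{\|y(k-1)\|^2},
\end{equation*}
Lipschitz continuity no longer controls the denominator as it did for $\alpha_1(k)$; instead I would appeal to the co-coercivity of the gradient of an $L$-smooth convex function, namely $s(k-1)^T y(k-1) \geq (1/L)\|y(k-1)\|^2$. This immediately yields $\alpha_2(k) \geq 1/L$, matching the assumed range and licensing the bound $0 < 1 - \alpha_2(k) c_2 < 1 - c_2/L$. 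Combining with $c_2/L < 1$ (which holds under $\mu < L$ exactly as in Lemma \ref{lemmacentralized1}) gives $\limsup_{k\to\infty} \|x(k+1)-x^*\|/\|x(k)-x^*\| \leq (1 - c_2/L)^{1/2} < 1$, hence $Q$-linear convergence.

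The main obstacle I anticipate is the lower-bound step: unlike $\alpha_1(k)$, whose denominator is amenable to a one-line Lipschitz argument, $\alpha_2(k)$ requires the stronger co-coercivity inequality, and one must be careful that this is available under Assumptions \ref{Assumption1} and \ref{Assumption4} (which it is, since strong convexity implies convexity and Lipschitz smoothness of the gradient of a convex function is equivalent to co-coercivity). Once that inequality is in hand, the remainder of the proof — verification that $c_1$ lies in the BB range (already done in Appendix \ref{Proof of Cor No11}), sign analysis of $(\alpha_2^2(k) - \alpha_2(k)c_1)\|g(k)\|^2$, and the final squeeze on the ratio — is essentially a transcription of the Lemma \ref{lemmacentralized1} argument with subscripts changed.
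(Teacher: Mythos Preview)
Your proposal is correct and follows essentially the same route as the paper's proof in Appendix~\ref{Proof of Lemma No2}. The only minor difference is that the paper does not spell out the co-coercivity argument for $\alpha_2(k)\geq 1/L$ but simply cites \cite{gao2019geometric}; your explicit derivation via $s(k-1)^T y(k-1)\geq (1/L)\|y(k-1)\|^2$ is exactly the right justification and makes that step self-contained.
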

\begin{proof}
See Appendix \ref{Proof of Lemma No2}
\end{proof}

\section{Distributed Barzilai-Borwein Quasi-Newton Method}\label{sec:Dec-BB}

In this section we present a distributed solution to Problem \eqref{eqn:quadratic}, where Assumptions \ref{assume-bounded-G}, \ref{Assumption1}, \ref{Assumption3} and \ref{Assumption4} hold. In our proposed distributed algorithm, each agent in the network keeps a local copy of the decision variable $x_i(k)$ and a local gradient $\nabla f_i (x_i(k))$ and updates them at each time-step using locally computed step sizes $\alpha_i(k)$. 
The step size computation is similar to the centralized case. Using the local variables $x_i(k)$ and local gradient variables $\nabla f_i(x_i(k))$,  each agent computes
\begin{align}
    s_{i}(k-1) &= x_{i}(k)-x_{i}(k-1), \label{decentralizedupdatebeta}\\
        y_{i}(k-1) &=\nabla f_{i}(x_i(k))-\nabla f_{i}(x_i(k-1)), \label{decentralizedgradientbeta}
\end{align}
%
and computes $\alpha_i(k)$ in a manner that ensures
\begin{equation}\label{decentralizedstepsizecondition}
    (\alpha_i(k)^{-1}I)s_{i}{(k-1}) \approx y_{i}(k-1).
\end{equation}
%
%
Using the expressions in \eqref{decentralizedupdatebeta} and \eqref{decentralizedgradientbeta}, we obtain the distributed form of the step size for each agent $i$, which is given by:
\begin{equation}\label{decentralizedstepsize1}
    \alpha_{i1}(k)=\frac{(s_{i}(k-1))^{T}s_{i}(k-1)}{(s_{i}(k-1))^{T}y_{i}(k-1)}.
\end{equation}

and 
\begin{equation}\label{decentralizedstepsize2}
    \alpha_{i2}(k)=\frac{(s_{i}(k-1))^{T}y_{i}(k-1)}{(y_{i}(k-1))^{T}y_{i}(k-1)}.
\end{equation}
To distribute the computations locally at each time step $k$, each agent $i$ uses the following update scheme  $x_i \in \mathbb{R}^n$:
\begin{equation}\label{decentralizediteration}
    x_i(k+1)=x_i(k)-\alpha_i(k)\nabla f_i(x_i(k)).
\end{equation}
To ensure all agents converge to the optimal solution, each agent carries an iterative local computation step and the interaction with neighbors lead to a consensus step. Each agents takes a weighted average of the information received from its neighbors, to compute its next update. With this protocol, the local update at each agent $i$ is given by:
\begin{equation}\label{localoptimizationstep}
    x_i(k+1)=\sum_{j\in N_i\cup {i}} (w_{ij}x_j(k)-\alpha_i(k)\nabla f_i(x_i(k))).
\end{equation}
where $w_{ij}$ are weights attached by agent $i$ to agent $j$'s estimate.\\
Given that $W=[w_{ij}]$, and if we let $X=[x_1, \hdots, x_n] \in \mathbb{R}^{np}$ be the concatenation of local variables $x_i$, $I_p$ is the identity matrix of dimension $p$,  $\otimes$ represents the Kronecker operation of matrix product and $W=[w_{ij}]$ is the doubly stochastic weight matrix that satisfies: $W\otimes I_p \in \mathbb{R}^{np\times np}$. We can re-write Equation \eqref{localoptimizationstep} as follows:
\begin{equation}\label{concatenation}
    X(k+1)=(W \otimes I_p)X(k)-\alpha_i \nabla f(X(k)).
\end{equation}
where $X$ is the concatenation of local $x_i$, and $\nabla f(X(k))\in \mathbb{R}^{np}$ is the concatenated gradients. Similarly we can denote the average of local estimates to be $\overline{x}(k)$, the average of local estimates to be $g(x(k))$, the average of the Lipschitz constants for agents to be $L$, and the average of the strong convexity parameters for agents to be $\mu$. We also denote the averages of the two step sizes of agents ($\alpha_{i1}$ and $\alpha_{i1}$) to be $\overline{\alpha_{i1}}$ and $\overline{\alpha_{i2}}$ respectively. Because $W$ is doubly stochastic, it has one eigenvalue $\lambda=1$ and the other eigenvalues satisfy $0<\lambda<1$.



In the distributed implementation of the BB algorithm, neighbors compute their local step sizes using local information and exchange decision estimates with their neighbors over the communication network. The process is summarized in Algorithm \ref{alg:algorithm-dist}.
\subsection{Algorithm for Distributed BB}
\begin{algorithm}
\caption{Algorithm for Distributed BB}
\label{alg:algorithm-dist}
\begin{algorithmic}[1]
\Statex \textbf{Initialize}: $\alpha_i(0), x_i(0), \nabla f_i(x_i(0))$ 
\While{$\|g(x(k))\| \geq \varepsilon$ }
\State Compute
\begin{align*}
& s_{i}(k-1) \ \text{using \eqref{decentralizedupdatebeta}},\\ 
& y_{i}(k-1) \ \text{using \eqref{decentralizedgradientbeta}},\\
& \alpha_i(k+1) \ \text{using \eqref{decentralizedstepsize1}}.
\end{align*}
\State Local update in equation \eqref{localoptimizationstep}
\State Communicate updates $x_i(k+1)$ with neighbors.
\EndWhile
\end{algorithmic}
\end{algorithm}

\section{Convergence Analysis of Distributed BB}\label{sec:convanadistributed}
We examine convergence of Algorithm \ref{alg:algorithm-dist} to the optimal point based on the local estimates. 
\subsection{Distributed BB Convergence Analysis with the First Step-Size}
We present the main result of this section in Theorem \ref{mainresult}; and later prove it via a series of Lemmas in the rest of the section.
\begin{theorem}\label{mainresult}
Consider Algorithm \ref{alg:algorithm-dist} for Problem \eqref{eqn:quadratic} and let Assumptions \ref{assume-bounded-G}, \ref{Assumption1},  \ref{Assumption3} and \ref{Assumption4} hold. In addition, let the Lipschitz continuity constant for $\nabla f_i(\cdot)$ and strong convexity parameter for $f_i(\cdot)$ satisfy $\mu_i\leq L_i$ for each agent $i$. If $\alpha_{i1}(k)$ in Equation \eqref{decentralizedstepsize1} is such that $1/L_i \leq \alpha_{i1}(k) \leq 2/(\mu_i + L_i)$, the iterates of each agent $i$ generated from Algorithm \ref{alg:algorithm} converge Q-Linearly to the optimal point $x^*$; that is
\begin{equation}\label{eq:main-inequality}
    \|x_i(k)-x^*\|\leq\|x_i(k)-\overline{x}(k)\|+\|\overline{x}(k)-x^*\|.
\end{equation}
and we obtain a Q-Linear convergence after the agents reach consensus on the average value. Moreover, each local estimate $x_i(k)$ converges to the neighborhood of the optimal solution, $x^*$ based on the two step sizes $\alpha_{i1}$ and $\alpha_{i2}$.
\end{theorem}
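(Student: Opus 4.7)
The plan is to start from the triangle inequality \eqref{eq:main-inequality} already displayed in the statement and bound the two summands separately. The first term, $\|x_i(k)-\overline{x}(k)\|$, measures the disagreement of agent $i$ with the network average and should contract geometrically because $W$ is doubly stochastic. The second term, $\|\overline{x}(k)-x^*\|$, measures how far the average of the local estimates is from the minimizer, and I expect a Q-linear contraction analogous to Lemma \ref{lemmacentralized1}. Combining the two bounds via the triangle inequality will then give the Q-linear decay of $\|x_i(k)-x^*\|$ once consensus is reached.

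For the consensus piece, I would left-multiply the concatenated update \eqref{concatenation} by $(\mathbf{1}\mathbf{1}^{T}/n)\otimes I_p$ to obtain a recursion for $\overline{x}(k+1)$, then subtract it from \eqref{concatenation} to get a recursion for the disagreement vector $X(k)-(\mathbf{1}\otimes \overline{x}(k))$. Because $W$ is doubly stochastic with spectrum $\{1\}\cup\{\lambda:|\lambda|<1\}$, the matrix $W-\mathbf{1}\mathbf{1}^{T}/n$ has spectral radius $\lambda_2<1$, and I would use this to show that $\|x_i(k)-\overline{x}(k)\|$ is upper bounded by $\lambda_2^{k}$ times the initial disagreement plus a perturbation proportional to $\overline{\alpha}\|\nabla f_i(x_i(k))\|$, which stays bounded under Assumptions \ref{assume-bounded-G} and \ref{Assumption4}.

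For the averaged iterate, averaging \eqref{localoptimizationstep} across $i$ and using double stochasticity gives
\begin{equation*}
    \overline{x}(k+1)=\overline{x}(k)-\frac{1}{n}\sum_{i=1}^{n}\alpha_{i1}(k)\nabla f_i(x_i(k)).
\end{equation*}
Adding and subtracting $(1/n)\sum_i \alpha_{i1}(k)\nabla f_i(\overline{x}(k))$ splits the gradient term into a descent part at the consensus point plus a residual controlled by the consensus error via Lipschitz continuity (Assumption \ref{Assumption4}). On the descent part I would repeat the squaring argument of Lemma \ref{lemmacentralized1}, using the hypothesis $1/L_i\leq \alpha_{i1}(k)\leq 2/(\mu_i+L_i)$ on every agent together with Theorem 2.1.12 of \cite{nesterov1998introductory}, which yields
\begin{equation*}
    \|\overline{x}(k+1)-x^*\|^{2}\leq (1-\overline{\alpha_{i1}}\,\overline{c_2})\|\overline{x}(k)-x^*\|^{2}+\varepsilon(k),
\end{equation*}
where $\overline{c_2}=2\mu L/(\mu+L)$ and $\varepsilon(k)$ is proportional to the squared consensus error. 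Using the geometric bound on $\|x_i(k)-\overline{x}(k)\|$ from the previous paragraph to absorb $\varepsilon(k)$, I would then divide through by $\|\overline{x}(k)-x^*\|$ and take the square root exactly as in the centralized proof to conclude that $\|\overline{x}(k+1)-x^*\|/\|\overline{x}(k)-x^*\|$ is asymptotically bounded above by $(1-\overline{c_2}/L)^{1/2}<1$.

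The main obstacle I anticipate is handling the cross-coupling between the two errors: the consensus contraction rate $\lambda_2$ and the optimization contraction rate $(1-\overline{c_2}/L)^{1/2}$ must both be strictly less than one, and the residual $\varepsilon(k)$ must decay fast enough that the inequality for $\|\overline{x}(k)-x^*\|$ remains a true Q-linear recursion rather than only an R-linear one. I would address this by choosing the dominant rate $\rho=\max\{\lambda_2,(1-\overline{c_2}/L)^{1/2}\}$ and showing that both error terms are bounded by a constant multiple of $\rho^{k}$, after which the triangle inequality \eqref{eq:main-inequality} delivers the Q-linear rate for each individual $x_i(k)$. The analogous statement for the second step size $\alpha_{i2}(k)$ follows by the same argument using Lemma \ref{lemmacentralized2}, which explains the final sentence of the theorem regarding convergence based on either $\alpha_{i1}$ or $\alpha_{i2}$.
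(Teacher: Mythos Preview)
Your high-level plan---split via the triangle inequality \eqref{eq:main-inequality} and control the two summands by separate lemmas---is exactly the paper's architecture (Lemmas \ref{lemmabounds1} and \ref{lemmaboundsneighborhood1}). The execution, however, differs in two places, and in both you are doing more work than the paper does.

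For the consensus term, the paper does not set up a recursion for the disagreement vector. It unrolls \eqref{concatenation} explicitly (implicitly taking $X(0)=0$) to write $X(k)=-\sum_{m=0}^{k-1}\alpha(m)(W^{k-1-m}\otimes I)\nabla f(X(m))$, subtracts the average, and applies Cauchy--Schwarz on the sum to get
\[
\|x_i(k)-\overline{x}(k)\|\le G\Bigl(\sum_{m=0}^{k-1}\alpha_{i1}^2(m)\Bigr)^{1/2}\Bigl(\sum_{m=0}^{k-1}\lambda^{2(k-1-m)}\Bigr)^{1/2}.
\]
This bound is \emph{not} shown to decay geometrically; it is only shown to be bounded (by $G\sqrt{k}/(\mu\sqrt{1-\lambda^2})$), and an extra hypothesis on $\sum_m\alpha_{i1}^2(m)$ is imposed to push it below $1$.

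For the averaged iterate, the paper does \emph{not} add and subtract $\nabla f_i(\overline{x}(k))$ and carries no residual $\varepsilon(k)$. It defines $g(k)=\frac{1}{n}\sum_i\nabla f_i(x_i(k))$ and applies Nesterov's Theorem~2.1.12 directly to $\overline{x}(k+1)=\overline{x}(k)-\overline{\alpha_{i1}}(k)g(k)$, obtaining $\|\overline{x}(k+1)-x^*\|\le(1-c_2/(nL))^{1/2}\|\overline{x}(k)-x^*\|$ with no consensus-error correction. As a result the two error terms are never coupled and no dominant rate $\rho$ is introduced: the Q-linear contraction is proved only for $\overline{x}(k)$, and the individual $x_i(k)$ are claimed to reach only a \emph{neighborhood} of $x^*$ (the last clause of the theorem). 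Your coupling machinery is aimed at a genuinely stronger conclusion than the paper actually establishes; it is sound in spirit, but unnecessary for reproducing the paper's proof, and the paper sidesteps the very cross-coupling issue you flag as the main obstacle.
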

To prove the main result in Theorem \ref{mainresult} we will take a two-step approach. First, we upper bound the norm of the difference between the individual agent iterates and the average of the agents' iterates in Lemma \ref{lemmabounds1}. Next, we show that the average of the agents' iterates converges Q-linearly to the optimal solution in Lemma \ref{lemmaboundsneighborhood1}.

\begin{lemma}\label{lemmabounds1}
Consider Algorithm \ref{alg:algorithm-dist} with BB step size $\alpha_{i1}$ in Equation \eqref{decentralizedstepsize1} for Problem \ref{eqn:quadratic} and suppose Assumptions \ref{assume-bounded-G}, \ref{Assumption1}, \ref{Assumption3} and \ref{Assumption4} hold; and 
$G$ be the upper bound of the gradients, then the norm of the difference between each local agent's estimate and the average agents' estimate is bounded by:

\begin{equation*}
\|x_i(k) - \overline{x}(k)\| \leq G \left(\sum_{m=0}^{k-1}\alpha_{i1}^2(m)\right)^{\frac{1}{2}}\left(\sum_{m=0}^{k-1}\lambda^{2(k-1-m)}\right)^{\frac{1}{2}},
\end{equation*}
where
\[
\left(\sum_{m=0}^k \alpha_i^2(m) \right)^{\frac{1}{2}} \leq \frac{\sqrt{k}}{\mu},
\]

and
\[\left(\sum_{m=0}^{k-1}\lambda^{2(k-1-m)}\right)^{\frac{1}{2}} \leq \left(\dfrac{1}{1-\lambda^{2}}\right)^{\frac{1}{2}} = \dfrac{1}{\sqrt{1-\lambda^{2}}} \triangleq\text{Q}_3.\]
Moreover if  
\begin{equation*}
    \sum_{m{=}0}^{k{-}1} \alpha_{i1}^2(m)\leq \frac{1}{G^2 \sum_{m=0}^{k-1}\lambda^{2(k-1-m)}},
\end{equation*}
then each local agent's estimates converges Q-linearly to its average; that is $\|x_i(k) - \overline{x}(k)\| \leq 1.$

\end{lemma}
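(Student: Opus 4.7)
The plan is to trace how the disagreement $e_i(k):=x_i(k)-\overline{x}(k)$ propagates through the doubly-stochastic mixing matrix $W$, unroll that recursion into a finite sum driven by the local gradient--step terms, and then apply the Cauchy--Schwarz inequality to split the result into the two square-root factors shown in the statement. The two auxiliary bounds (the $\sqrt{k}/\mu$ bound and the $1/\sqrt{1-\lambda^{2}}$ bound) then follow from the step-size range already established in Lemma~\ref{lemmacentralized1} and from summing a geometric series, and the concluding inequality $\|x_i(k)-\overline{x}(k)\|\leq 1$ is a direct substitution under the stated hypothesis.

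First I would form the running average $\overline{x}(k+1)$ by summing \eqref{localoptimizationstep} over $i$; double-stochasticity of $W$ collapses the mixing term and gives $\overline{x}(k+1)=\overline{x}(k)-\tfrac{1}{n}\sum_{j}\alpha_{j1}(k)\nabla f_j(x_j(k))$. Subtracting this from \eqref{localoptimizationstep} and using $\sum_j w_{ij}\overline{x}(k)=\overline{x}(k)$ yields the consensus-error recursion
\begin{equation*}
e_i(k+1)=\sum_{j=1}^n w_{ij}\, e_j(k)+u_i(k), \quad u_i(k):=\tfrac{1}{n}\sum_{j=1}^n \phi_j(k)-\phi_i(k),
\end{equation*}
where $\phi_j(k):=\alpha_{j1}(k)\nabla f_j(x_j(k))$. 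Iterating from $k=0$ with synchronized initialization ($e_i(0)=0$) produces the closed-form expression $e_i(k)=\sum_{m=0}^{k-1}[\mathcal{W}^{k-1-m}U(m)]_i$, where $\mathcal{W}:=W\otimes I_p$ and $U(m)$ is the stack of the $u_i(m)$.

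Next I would exploit the fact that $U(m)$ is orthogonal to the all-ones direction $\mathbf{1}\otimes I_p$ by construction (since $\sum_i u_i(m)=0$), so $\mathcal{W}$ acts on $U(m)$ with operator norm at most $\lambda$, the second-largest eigenvalue of $W$. Consequently $\|[\mathcal{W}^{k-1-m}U(m)]_i\|\leq \lambda^{k-1-m}\|U(m)\|$, and folding in the uniform gradient bound $\|\nabla f_i\|\leq G$ gives the scalar majorization $\|e_i(k)\|\leq G\sum_{m=0}^{k-1}\lambda^{k-1-m}\alpha_{i1}(m)$. Applying Cauchy--Schwarz to this single sum then delivers the main inequality in the displayed form.

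Finally, for the two auxiliary estimates I would use the step-size range $\alpha_{i1}(m)\leq 2/(\mu+L)\leq 1/\mu$ from Lemma~\ref{lemmacentralized1} to obtain $\sum_{m=0}^{k-1}\alpha_{i1}^{2}(m)\leq k/\mu^{2}$, and compute $\sum_{j=0}^{k-1}\lambda^{2j}=(1-\lambda^{2k})/(1-\lambda^{2})\leq 1/(1-\lambda^{2})$. The concluding statement $\|x_i(k)-\overline{x}(k)\|\leq 1$ is then an immediate consequence of the hypothesis imposed on $\sum\alpha_{i1}^{2}(m)$. The main obstacle I anticipate is the bookkeeping between per-agent and stacked norms in the step from $\|U(m)\|$ to $\alpha_{i1}(m)G$: Assumption~\ref{assume-bounded-G} on boundedness together with the uniform gradient bound $G$ must absorb any factor arising from the $n$ agents without disturbing the $i$-specific step-size factor that the statement places inside the square root.
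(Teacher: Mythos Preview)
Your proposal is correct and follows essentially the same route as the paper's proof: unroll the mixing recursion, use the spectral gap of $W-\tfrac{1}{n}\mathbf{1}\mathbf{1}^{T}$ to produce the factor $\lambda^{k-1-m}$, invoke the gradient bound $G$, apply Cauchy--Schwarz on the resulting single sum, and then handle the two square-root factors via $\alpha_{i1}(m)\leq 1/\mu$ and the geometric series $\sum_{j}\lambda^{2j}\leq 1/(1-\lambda^{2})$. The only cosmetic difference is that the paper works with the stacked vector $X(k)$ and the projector $\tfrac{1}{n}(1_n1_n^{T})\otimes I$ rather than your per-agent error $e_i(k)$ and forcing term $u_i(k)$; the bookkeeping issue you flag (passing from $\|U(m)\|$ to the agent-specific $\alpha_{i1}(m)G$) is glossed over in the paper as well, so your concern is well placed but does not indicate a divergence in strategy.
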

\begin{proof}
The proof is presented in Appendix \ref{Proof of Lemma 4}
\end{proof}
After obtaining the norm of the difference between local estimates and the consensus average estimates, we now examine the convergence attribute of the average estimates $\bar{x}(k)$ to the optimal solution $x^*$.
Before proceeding, we state an important lemma that leads to the convergence behavior of average estimates to the optimal point.
  
  \begin{lemma}\label{lemmaboundsneighborhood1}
Consider Algorithm \ref{alg:algorithm-dist} for Problem \eqref{eqn:quadratic} and let Assumptions \ref{assume-bounded-G}, \ref{Assumption1},  \ref{Assumption3} and \ref{Assumption4} hold. In addition, let the Lipschitz continuity constant for $\nabla f(\cdot)$ and strong convexity parameter for $f(\cdot)$ satisfy $\mu\leq L$. If the average of the first distributed BB step size $\overline{\alpha_{i1}}$ is such that $1/L \leq \overline{\alpha_{i1}} \leq 2/(\mu + L)$,
For finite values of $i$ and $k$, and bounded gradients, the consensus average estimate converges Q-Linearly to the optimal point. Moreover, the local agent estimates converge to the neighborhood of the optimal point, $x^*$.
\end{lemma}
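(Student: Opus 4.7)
The plan is to leverage the doubly stochastic nature of $W$ to obtain a scalar-valued recursion for the average iterate $\overline{x}(k)$, and then apply the same strongly convex contraction argument used in the centralized proof of Lemma \ref{lemmacentralized1}, but now with an additive consensus-error term. First I would average the distributed update \eqref{localoptimizationstep} over the $n$ agents. Since $W\otimes I_p$ preserves averages, the mixing term collapses and one obtains
$$\overline{x}(k+1) \;=\; \overline{x}(k) \;-\; \frac{1}{n}\sum_{i=1}^{n}\alpha_{i1}(k)\,\nabla f_i(x_i(k)).$$
Writing $\nabla f_i(x_i(k)) = \nabla f_i(\overline{x}(k)) + \bigl(\nabla f_i(x_i(k)) - \nabla f_i(\overline{x}(k))\bigr)$ and replacing $\alpha_{i1}(k)$ by $\overline{\alpha_{i1}}$ plus a deviation, this recursion decomposes into a centralized-looking gradient step at $\overline{x}(k)$ driven by $\overline{\alpha_{i1}}$ and $\sum_i \nabla f_i(\overline{x}(k))/n$, plus a perturbation that captures both the heterogeneity of local step sizes and the disagreement between local iterates and their average.

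Next I would form $\|\overline{x}(k+1)-x^*\|^2$, expand the square, and mirror the bounding procedure that led to inequality \eqref{qlinearrate}. Using strong convexity of $f$ with parameter $\mu$, Lipschitz continuity of $\nabla f$ with constant $L$, the identity $2a^Tb\le\|a\|^2+\|b\|^2$, and Theorem 2.1.12 of \cite{nesterov1998introductory}, the ``centered'' part yields a contraction factor of the form $(1-\overline{\alpha_{i1}}\,c_2)$ with $c_1=2/(\mu+L)$ and $c_2=2\mu L/(\mu+L)$, exactly as in Lemma \ref{lemmacentralized1}, provided $\overline{\alpha_{i1}}\in[1/L,\,2/(\mu+L)]$. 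Taking square roots and arguing as at the end of Section \ref{sec:convanalysisfirststepsize} that $c_2/L<1$ whenever $\mu<L$, one gets
$$\frac{\|\overline{x}(k+1)-x^*\|}{\|\overline{x}(k)-x^*\|} \;\le\; \left(1-\frac{c_2}{L}\right)^{1/2} \;<\; 1,$$
i.e.\ $Q$-linear contraction of the consensus average to $x^*$.

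To obtain the second claim about local estimates, I would then invoke the triangle inequality \eqref{eq:main-inequality} from Theorem \ref{mainresult}, $\|x_i(k)-x^*\|\le\|x_i(k)-\overline{x}(k)\|+\|\overline{x}(k)-x^*\|$. The first term is controlled by Lemma \ref{lemmabounds1} through the bound $G\cdot\bigl(\sum_m \alpha_{i1}^2(m)\bigr)^{1/2}\cdot\text{Q}_3$, while the second term decays $Q$-linearly by the contraction just derived. Since the consensus-error term does not in general vanish (it is driven by the bounded gradient constant $G$ and the mixing parameter $\lambda<1$), $x_i(k)$ is only guaranteed to enter a neighborhood of $x^*$ whose radius scales with $G\,\text{Q}_3$, which is exactly the ``moreover'' clause of the statement.

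The main obstacle will be handling the gradient-disagreement term $\nabla f_i(x_i(k))-\nabla f_i(\overline{x}(k))$ so that it does not overwhelm the contraction. The cleanest route is Lipschitz continuity, bounding it by $L\|x_i(k)-\overline{x}(k)\|$, and then absorbing the square into the contraction via Young's inequality with a small enough coefficient that the net per-step factor still lies strictly below $1$. A subtler point is that the ``centered'' recursion uses the averaged step size $\overline{\alpha_{i1}}$ rather than the individual $\alpha_{i1}(k)$, so the bracket $[1/L,\,2/(\mu+L)]$ must be imposed uniformly in $i$ (so that its arithmetic average inherits the same interval), which is exactly the hypothesis on $\overline{\alpha_{i1}}$ in the statement. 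Under these bounds, combining the centered contraction with the consensus bound from Lemma \ref{lemmabounds1} completes the proof.
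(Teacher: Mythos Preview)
Your overall strategy---mirror the centralized contraction of Lemma~\ref{lemmacentralized1} on the averaged recursion, then invoke Lemma~\ref{lemmabounds1} and the triangle inequality for the neighborhood claim---matches the paper's. The paper, however, takes a considerably more direct (and less careful) route: it simply writes $\overline{x}(k+1)=\overline{x}(k)-\overline{\alpha_{i1}}(k)\,g(k)$ with $g(k)=\frac{1}{n}\sum_i\nabla f_i(x_i(k))$ and applies Nesterov's Theorem~2.1.12 to the pair $(\overline{x}(k),g(k))$ \emph{as if} $g(k)$ were $\nabla f(\overline{x}(k))$, without any decomposition into a centered gradient plus disagreement term. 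So the perturbation you single out as ``the main obstacle'' is never introduced and never bounded in the paper; your Lipschitz/Young treatment of $\nabla f_i(x_i(k))-\nabla f_i(\overline{x}(k))$ is additional rigor that the paper does not attempt. A second difference is the final rate: the paper lower-bounds the averaged step size only by $\overline{\alpha_{i1}}(k)>1/(nL)$ (arguing from $n\overline{\alpha_{i1}}=\sum_i\alpha_{i1}>1/L$), which yields the contraction factor $(1-c_2/(nL))^{1/2}$ rather than the $(1-c_2/L)^{1/2}$ you obtain from the hypothesis $\overline{\alpha_{i1}}\ge 1/L$. In short, your argument is the more defensible one, but if the goal is to reproduce the paper's proof you should drop the gradient-splitting step and the Young-inequality absorption, apply the strongly-convex inequality directly to $g(k)$, and carry the extra factor of $n$ in the rate.
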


\begin{proof}
Let $\overline{x}(k) = \frac{1}{n}\sum\limits_{i=1}^n x_i(k)$, $g(k) = \frac{1}{n}\sum\limits_{i=1}^n \nabla f_i\left(x_i(k)\right)$, and $\overline{\alpha_{i1}}(k)=\frac{1}{n}\sum\limits_{i=1}^n \alpha_{i1}(k)$.

From equation \ref{decentralizediteration}, we first consider $\|\overline{x}(k+1)-x^*\|^2$ to obtain bounds for convergence. First, we let $g(k)$ be the average of gradient at local estimates and we let $\overline{\alpha_{i1}}$ be the average of the agents step sizes corresponding to the average of the iterates.
\begin{equation*}
  \|\overline{x}(k+1)-x^*\|=\|\overline{x}(k)-x^*-\overline{\alpha_{i1}}g(k)\|.
\end{equation*}
By squaring both sides and evaluating the right hand side, we have:
\begin{equation}
\begin{aligned}
\|x(k) {-} x^* {-}\overline{\alpha_{i1}}(k)g(k)\|^2 =& \|x(k) {-} x^*\|^2 + \overline{\alpha_{i1}}^2(k)\|g(k)\|^2 \\
 & -2\left(x(k){-}x^*\right)^T\left(\overline{\alpha_{i1}}(k) g(k)\right).
\end{aligned}
\end{equation}
Using the fact that for all vectors $a,\ b, \ 2a^{T}b\leq \|a\|^2+\|b\|^2$, we obtain the relationship:
\begin{equation*}
2\left(\overline{x}(k)- x^*\right)^T\left(g(k)\right) \leq \|g(k)\|^2 + \|\overline{x}(k) - x^*\|^2.
\end{equation*}
Just as we did for the centralized case, $\mu$ and $L$ are strong convexity and Lipschitz parameters respectively and $c_1$, $c_2$ are given by $c_1=2/(\mu +L)$ and $c_2=2\mu L/(\mu +L)$. We now obtain:
\begin{equation}
\begin{aligned}
\|\overline{x}(k{+}1) {-} x^*{-}\overline{\alpha_{i1}}(k)g(k)\|^2 
 \leq & \|\overline{x}(k){-} x^*\|^2 {+} \overline{\alpha_{i1}}^2(k)\|g(k)\|^2 \\
 & -\overline{\alpha_{i1}}(k)c_1\|g(k)\|^2 \\&- \overline{\alpha_{i1}}(k)c_2\|\overline{x}(k) {-} x^*\|^2, \\
  \leq  & (1-\overline{\alpha_{i1}}(k)c_2)\|\overline{x}(k) {-} x^*\|^2\\ &+(\overline{\alpha_{i1}}^2(k)-\overline{\alpha_{i1}}(k)c_1)\|g(k)\|^2,\\
 \leq & (1-\overline{\alpha_{i1}}(k)c_2)\|\overline{x}(k) {-} x^*\|^2.
 \label{qdistributedrate}
\end{aligned}
\end{equation}
We note that the last inequality is due to Theorem $2.1.12$ from chapter $2$ of \cite{nesterov1998introductory}. We also note that in the previous inequality, the term that contains the distributed form of the step  size, $(\overline{\alpha_{i1}}^2(k)-\overline{\alpha_{i1}}(k)c_1)\|g(k)\|^2\leq 0$ provided $\overline{\alpha_{i1}}(k)\leq c_1$.
We show that the step size $\overline{\alpha_{i1}}(k)=c_1$ is within the range of the BB step size bounds below:
\begin{corollary}\label{cor:cbb1}
Let $L$ and $\mu$ be the Lipschitz and strong convexity parameters respectively with $\mu \leq L$. The range of the average of the distributed BB step size $\overline{\alpha_{i1}}(k)$ is given by:
\end{corollary}
\begin{equation}\label{cbb-bound}
\frac{1}{L}\leq \overline{\alpha_{i1}}(k) \leq \frac{2}{\mu +L}\leq \frac{1}{\mu}.
\end{equation}
where the condition $\overline{\alpha_{i1}}(k)\leq \frac{1}{\mu}$ is assumed.
See Appendix \ref{Proof of Cor No11} for details.

Therefore the distributed BB convergence using the first BB step size can be analysed as:
\begin{eqnarray}
\|\overline{x}(k+1) - x^*\|^2 &\leq& \left(1-\overline{\alpha_{i1}}(k)c_2\right)\|\overline{x}(k) - x^*\|^2,\nonumber\\
 \dfrac{\|\overline{x}(k+1) - x^*\|^2}{\|\overline{x}(k) - x^*\|^2} &\leq& 1-\overline{\alpha_{i1}}(k)c_2,\nonumber\\
\text{and} \quad \dfrac{\|\overline{x}(k+1) - x^*\|}{\|\overline{x}(k) - x^*\|} &\leq& \left(1-\overline{\alpha_{i1}}(k)c_2\right)^{\frac{1}{2}}. \nonumber
\end{eqnarray}
 We will now bound: $\left(1-\overline{\alpha_{i1}}(k)c_2\right)^{\frac{1}{2}}$. 
 The distributed form of the first Barzilai-Borwein step size $\alpha_{i1}(k)$ is given by:
\begin{eqnarray}
\alpha_{i1}(k) = \dfrac{\|s_{i}(k-1)\|^2}{\left[x_{i}(k) - x_{i}(k-1)\right]^T\left[\nabla f_{i}\left(x_{i}(k)\right) - \nabla f_{i}\left(x_{i}(k-1)\right)\right]}\nonumber
\end{eqnarray}
By using Lipschitz continuity of $\nabla f(\cdot)$ with $L$ as the Lipschitz constant, we obtain the lower bound of distributed form of the first BB step size as:
\begin{eqnarray}
\alpha_{i1}(k) > \dfrac{\|x_{i}(k) - x_{i}(k-1)\|^2}{L\|x_{i}(k) - x_{i}(k-1)\|^2} = \dfrac{1}{L}.\nonumber
\end{eqnarray}
We know that $\overline{\alpha_{i1}}(k)=\frac{1}{n}\sum\limits_{i=1}^n \alpha_{i1}(k)$, and it follows that $n\overline{\alpha_{i1}}(k)=\sum\limits_{i=1}^n \alpha_{i1}(k)$. But we know that $\alpha_{i1}(k)>\frac{1}{L}$, and as a fact, $\alpha_{i1}(k)<\sum\limits_{i=1}^n \alpha_{i1}(k)$. Therefore we obtain the relationship:
\begin{equation}\label{sum-step-size-bound}
    \frac{1}{L}<\alpha_{i1}(k)<\sum\limits_{i=1}^n \alpha_{i1}(k).
\end{equation}
From equation \eqref{sum-step-size-bound}, $n\overline{\alpha_{i1}}(k)=\sum\limits_{i=1}^n \alpha_{i1}(k)>\frac{1}{L}$ and we obtain the fact that $\overline{\alpha_{i1}}(k)>\frac{1}{nL}$.

If $\overline{\alpha_{i1}}(k)$ and $c_2$ are positive and $\overline{\alpha_{i1}}(k) > 1/nL$, then
 $ -\overline{\alpha_{i1}}(k)c_2 < -c_2/nL$.
  Since $\overline{\alpha_{i1}}(k) > 1/nL$.
it implies that $ 0 < 1-\overline{\alpha_{i1}}(k)c_2 < 1-c_2/nL$,
Therefore,
\begin{eqnarray}
 0 < \left(1-\overline{\alpha_{i1}}(k)c_2\right)^{\frac{1}{2}} < \left(1-\dfrac{c_2}{nL}\right)^{\frac{1}{2}}\nonumber
\end{eqnarray}
We will now show that $c_2/nL<1$. If $c_2=2\mu L/(\mu +L)$, then it implies that $c_2/nL=2\mu /n(\mu +L)$.
If $\mu \leq L$, then we have $\mu +\mu \leq L+\mu$ and we obtain that $2\mu/n(\mu +L)\leq1$ for all positive values of $n$.
Therefore, $$\lim_{k\to\infty}\quad \dfrac{\|\overline{x}(k+1) - x^*\|}{\|\overline{x}(k) - x^*\|} \leq\left(1-\dfrac{c_2}{nL}\right)^{\frac{1}{2}}\leq 1,$$
So the average of the estimates converges Q-linearly to the optimal point, $x^*$.
\end{proof}

\subsection{Distributed BB with Second Step-Size}
\begin{lemma}\label{lemmabounds2}
Suppose Assumptions  \ref{assume-bounded-G}, \ref{Assumption1}, \ref{Assumption3} and \ref{Assumption4} hold. For the second BB step size given by:
\begin{equation*}
    \alpha_{i2} = \frac{s_i(k-1)^Ty_i(k-1)}{y_i(k-1)^Ty_i(k-1)}.
\end{equation*}
For finite values of $i$ and $k$, and bounded gradients, where $G$ is the upper bound of the gradients, the norm of the difference between the local agents estimate and the consensus average estimate is bounded and given by:
\begin{eqnarray}
\|x_i(k) - \overline{x}(k)\| \leq G \left(\sum_{m=0}^{k-1}\alpha_{i2}^2(m)\right)^{\frac{1}{2}}\left(\sum_{m=0}^{k-1}\lambda^{2(k-1-m)}\right)^{\frac{1}{2}}.
\end{eqnarray}
Where:
\[
\left(\sum_{m=0}^k \alpha_i^2(m) \right)^{\frac{1}{2}} \leq \frac{\sqrt{k}}{\mu}.
\]
and:
\[\left(\sum_{m=0}^{k-1}\lambda^{2(k-1-m)}\right)^{\frac{1}{2}} \leq \left(\dfrac{1}{1-\lambda^{2}}\right)^{\frac{1}{2}} = \dfrac{1}{\sqrt{1-\lambda^{2}}} \triangleq \text{Q}_3\].
\end{lemma}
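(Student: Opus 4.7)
The plan is to mirror the structure of the proof of Lemma \ref{lemmabounds1} almost verbatim, swapping the first BB step size $\alpha_{i1}$ for the second BB step size $\alpha_{i2}$, and to isolate the one place where the two step sizes require genuinely different handling, namely the a priori upper bound on $\alpha_{i2}(k)$.

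First I would rewrite the distributed update \eqref{localoptimizationstep} in the concatenated form \eqref{concatenation} and subtract the network-wide average $\overline{x}(k)$ from both sides, using the fact that $W$ is doubly stochastic so that $\tfrac{1}{n}\mathbf{1}\mathbf{1}^T W = \tfrac{1}{n}\mathbf{1}\mathbf{1}^T$. Unrolling the resulting recurrence from $m=0$ to $k-1$ expresses $x_i(k)-\overline{x}(k)$ as a sum of gradient terms weighted by entries of the matrix power $\bigl(W-\tfrac{1}{n}\mathbf{1}\mathbf{1}^T\bigr)^{k-1-m}$, whose operator norm is bounded by $\lambda^{k-1-m}$ since $\lambda$ is the second-largest eigenvalue modulus of $W$ and the $\lambda=1$ eigenspace has been removed.

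Next I would apply the triangle inequality, pull the bounded-gradient constant $G$ out via Assumption \ref{assume-bounded-G}, and then apply the Cauchy--Schwarz inequality to the resulting sum over $m$ to split it into
\[
\Bigl(\sum_{m=0}^{k-1}\alpha_{i2}^{2}(m)\Bigr)^{\!1/2}\Bigl(\sum_{m=0}^{k-1}\lambda^{2(k-1-m)}\Bigr)^{\!1/2},
\]
which is exactly the product that appears in the statement. The second factor is a finite geometric sum bounded by $1/\sqrt{1-\lambda^{2}}$, identical to the bound $\mathrm{Q}_3$ derived in Lemma \ref{lemmabounds1}, and I would reuse that calculation verbatim.

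The step that actually requires new work, and I expect to be the main obstacle, is the per-iteration upper bound $\alpha_{i2}(k)\leq 1/\mu$, from which $\bigl(\sum_{m=0}^{k-1}\alpha_{i2}^{2}(m)\bigr)^{1/2}\leq \sqrt{k}/\mu$ follows immediately. The natural route is: apply Cauchy--Schwarz to the numerator of \eqref{decentralizedstepsize2} to get $s_i^{T}y_i\leq \|s_i\|\,\|y_i\|$, so that $\alpha_{i2}(k)\leq \|s_i(k-1)\|/\|y_i(k-1)\|$, and then invoke strong convexity of $f_i$ (Assumption \ref{Assumption1}), which via the standard monotonicity estimate yields $\|y_i(k-1)\|\geq \mu\,\|s_i(k-1)\|$; combining these gives the desired $1/\mu$ bound. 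A lower bound on $\alpha_{i2}(k)$ is not required for this lemma, though I would note in passing that Lipschitz continuity (Assumption \ref{Assumption4}) provides $\alpha_{i2}(k)\geq 1/L$ by the same kind of argument, which will be needed later in Theorem \ref{mainresult} when the bound on $\overline{\alpha_{i2}}$ is used. Assembling the three pieces --- the geometric-sum bound, the $\sqrt{k}/\mu$ bound, and the gradient-boundedness constant $G$ --- yields the claimed inequality and completes the argument.
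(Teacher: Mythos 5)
Your proposal follows essentially the same route as the paper's proof: unroll the concatenated update, use double stochasticity to reduce to powers of $W-\tfrac{1}{n}\mathbf{1}\mathbf{1}^{T}$ bounded by $\lambda^{k-1-m}$, pull out $G$, apply Cauchy--Schwarz on the sum, and bound the two resulting factors by $\sqrt{k}/\mu$ and $1/\sqrt{1-\lambda^{2}}$. The one place you go slightly further is in actually deriving $\alpha_{i2}(k)\leq 1/\mu$ from Cauchy--Schwarz plus the strong-convexity monotonicity estimate $y_i^{T}s_i\geq\mu\|s_i\|^{2}$, whereas the paper simply asserts this bound; your derivation is correct and fills that small gap.
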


\begin{proof}
The proof of this result is similar to the proof of Lemma \ref{lemmabounds1}; see Appendix \ref{Proof of Lemma 6}
\end{proof}
Now, we bound the convergence of average of local estimates to the optimal point using the second step size. First let us consider $\|\overline{x}(k+1) - x^*\|^2$:
We will state a lemma before we proceed to the convergence analysis.
\begin{lemma}\label{lemmaboundsneighborhood2}
Consider Algorithm \ref{alg:algorithm-dist} for Problem \eqref{eqn:quadratic} and let Assumptions \ref{assume-bounded-G}, \ref{Assumption1},  \ref{Assumption3} and \ref{Assumption4} hold. In addition, let the Lipschitz continuity constant for $\nabla f(\cdot)$ and strong convexity parameter for $f(\cdot)$ satisfy $\mu\leq L$. If the average of the second distributed BB step size $\overline{\alpha_{i2}}$ is such that $1/L \leq \overline{\alpha_{i2}} \leq 2/(\mu + L)$,
For finite values of $i$ and $k$, and bounded gradients, the consensus average estimate converges Q-Linearly to the optimal point. Moreover, the local agent estimates converge to the neighborhood of the optimal point, $x^*$.
\end{lemma}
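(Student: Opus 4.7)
The plan is to mirror the structure used for the first step size in Lemma \ref{lemmaboundsneighborhood1}, since under the stated hypotheses the second step size $\alpha_{i2}$ lies in the same interval $[1/L,\,2/(\mu+L)]$ and therefore plays exactly the same role in the descent inequality. First I would start from the per-agent update \eqref{decentralizediteration}, average across agents, and write $\overline{x}(k+1)-x^*=\overline{x}(k)-x^*-\overline{\alpha_{i2}}(k)\,g(k)$, where $g(k)$ is the mean of the local gradients at the local iterates and $\overline{\alpha_{i2}}(k)=\tfrac{1}{n}\sum_i\alpha_{i2}(k)$. Squaring and using $2a^T b\leq\|a\|^2+\|b\|^2$ on the cross term yields an expansion identical in form to the one derived in the proof of Lemma \ref{lemmaboundsneighborhood1}.

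Next I would apply the standard co-coercivity bound coming from Theorem 2.1.12 of \cite{nesterov1998introductory} to the inner product $(\overline{x}(k)-x^*)^T g(k)$, obtaining
\begin{equation*}
\|\overline{x}(k+1)-x^*\|^2\leq\bigl(1-\overline{\alpha_{i2}}(k)c_2\bigr)\|\overline{x}(k)-x^*\|^2+\bigl(\overline{\alpha_{i2}}^2(k)-\overline{\alpha_{i2}}(k)c_1\bigr)\|g(k)\|^2,
\end{equation*}
with $c_1=2/(\mu+L)$ and $c_2=2\mu L/(\mu+L)$ as before. The upper-bound hypothesis $\overline{\alpha_{i2}}(k)\leq c_1$ forces the $\|g(k)\|^2$ term to be non-positive, so it can be discarded, leaving the one-step contraction $\|\overline{x}(k+1)-x^*\|^2\leq(1-\overline{\alpha_{i2}}(k)c_2)\|\overline{x}(k)-x^*\|^2$.

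The one place where the argument genuinely differs from the first-step-size case is the derivation of the lower bound $\overline{\alpha_{i2}}(k)\geq 1/(nL)$. For $\alpha_{i1}$, the bound followed directly from Lipschitz continuity of $\nabla f_i$ applied to the denominator. For the second step size $\alpha_{i2}(k)=s_i(k-1)^T y_i(k-1)/\|y_i(k-1)\|^2$, I would instead use the co-coercivity inequality $\|y_i(k-1)\|^2\leq L\,s_i(k-1)^T y_i(k-1)$, valid under Assumptions \ref{Assumption1} and \ref{Assumption4} and well-defined by Assumption \ref{Assumption3}. This immediately gives $\alpha_{i2}(k)\geq 1/L$ per agent, and averaging over the $n$ agents yields $\overline{\alpha_{i2}}(k)\geq 1/(nL)$. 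Substituting into the contraction inequality and mimicking the final step of Lemma \ref{lemmaboundsneighborhood1} produces
\begin{equation*}
\lim_{k\to\infty}\frac{\|\overline{x}(k+1)-x^*\|}{\|\overline{x}(k)-x^*\|}\leq\left(1-\frac{c_2}{nL}\right)^{1/2}<1,
\end{equation*}
where the strict inequality uses $\mu\leq L$ to conclude $c_2/(nL)=2\mu/(n(\mu+L))<1$ for any positive $n$. Combining this Q-linear bound on $\overline{x}(k)$ with the consensus bound of Lemma \ref{lemmabounds2} through the triangle inequality \eqref{eq:main-inequality} then places each local estimate $x_i(k)$ in a shrinking neighborhood of $x^*$.

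The main obstacle I anticipate is not the algebra, which parallels the first-step-size proof almost verbatim, but rather ensuring that the co-coercivity estimate is invoked cleanly in the distributed setting where $s_i(k-1)$ and $y_i(k-1)$ are built from agent $i$'s local iterates rather than from a common point. This is ultimately justified by the individual $L_i$-smoothness and strong convexity of each $f_i$, but it requires explicit appeal to Assumption \ref{Assumption3} to guarantee $s_i^T y_i>0$ so that $\alpha_{i2}(k)$ is well-defined and strictly positive, and care must be taken when averaging step sizes across agents so that $\overline{\alpha_{i2}}(k)$ remains in the admissible interval throughout the iterations.
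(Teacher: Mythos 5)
Your proposal follows essentially the same route as the paper's proof: average the local updates, expand $\|\overline{x}(k+1)-x^*\|^2$, invoke Theorem 2.1.12 of \cite{nesterov1998introductory} to obtain the contraction factor $1-\overline{\alpha_{i2}}(k)c_2$, discard the non-positive gradient term using $\overline{\alpha_{i2}}(k)\leq c_1$, lower-bound the averaged step size by $1/(nL)$, and conclude the Q-linear rate. The only difference is that you explicitly derive the per-agent lower bound $\alpha_{i2}(k)\geq 1/L$ from the co-coercivity inequality $\|y_i\|^2\leq L\,s_i^Ty_i$, a detail the paper asserts only by reference to \cite{gao2019geometric}; this is a welcome clarification rather than a departure.
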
 
\begin{proof}
See Appendix \ref{secondstepsizeconvproof}.
\end{proof}

\section{On Superlinear Convergence of Algorithm 1}\label{sec:Superlinear}
In the preceding sections, we obtained Q-linear convergence for Algorithms \ref{alg:algorithm} and \ref{alg:algorithm-dist}, when the cost function being minimized is strongly convex with Lipschitz-continuous gradients. A consequence of Lemmas \ref{lemmaboundsneighborhood1} and \ref{lemmaboundsneighborhood2} is that under certain conditions, Algorithm \ref{alg:algorithm-dist} converges superlinearly to the optimal solution $x^*$.
\begin{corollary}\label{cor:superlinear}
Consider Problem \ref{eqn:quadratic}. If the objective function $f(\cdot)$ being minimized is such that its strong convexity parameter is equal to the Lipschiz gradient, then the iterates generated by Algorithm \ref{alg:algorithm} converges superlinearly to the optimal solution.
\end{corollary}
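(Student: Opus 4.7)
The plan is to leverage the bound already established in the proof of Lemma \ref{lemmacentralized1} and specialize it to the case $\mu = L$. From that lemma, the centralized BB iterates satisfy
\begin{equation*}
\frac{\|x(k+1)-x^*\|}{\|x(k)-x^*\|} \leq \left(1-\alpha_1(k)c_2\right)^{\frac{1}{2}},
\end{equation*}
where $c_2 = 2\mu L/(\mu+L)$, together with the admissible range $1/L \leq \alpha_1(k) \leq 2/(\mu+L)$. The first step I would take is to substitute $\mu=L$ into these expressions to see that $c_1 = 2/(\mu+L) = 1/L$ and $c_2 = 2L^2/(2L) = L$, so the permissible window for $\alpha_1(k)$ collapses to the single point $\alpha_1(k) = 1/L$.

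Next, I would plug $\alpha_1(k)=1/L$ and $c_2 = L$ back into the contraction factor, giving $1-\alpha_1(k)c_2 = 1 - 1 = 0$. Consequently
\begin{equation*}
\lim_{k\to\infty}\frac{\|x(k+1)-x^*\|}{\|x(k)-x^*\|} \leq 0,
\end{equation*}
and since the ratio is nonnegative it must equal zero. By the standard definition (see e.g. \cite{nesterov1998introductory}), a quotient tending to zero is precisely Q-superlinear convergence, which yields the claim. A brief sanity check reinforces the result: when $\mu=L$, Assumption \ref{Assumption1} together with Assumption \ref{Assumption4} forces the Hessian (when it exists) to equal $LI$, so $f$ behaves like a scaled quadratic, and a gradient step with $\alpha = 1/L$ reaches $x^*$ in one iteration.

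The only real obstacle, rather than any analytical difficulty, is confirming consistency at the boundary $\mu=L$: the proof of Lemma \ref{lemmacentralized1} drops the term $(\alpha_1^2(k)-\alpha_1(k)c_1)\|g(k)\|^2$ under the hypothesis $\alpha_1(k)\leq c_1$, and here $\alpha_1(k) = c_1 = 1/L$ exactly, so that term is zero and the inequality remains valid. I would also briefly note that the admissible BB step-size interval $[1/L,\,2/(\mu+L)]$ is nonempty (a single point) in this limiting case, so Assumption \ref{Assumption3} and the BB recursion \eqref{eqn:bb-step} both produce the required value, and no additional argument is needed beyond the specialization above.
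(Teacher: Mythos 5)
Your proof follows essentially the same route as the paper: specialize the Q-linear contraction factor $\left(1-\alpha_1(k)c_2\right)^{\frac{1}{2}}$ from Lemma \ref{lemmacentralized1} to the case $\mu=L$, observe that $c_2/L=2\mu/(\mu+L)=1$ so the factor vanishes, and conclude that the ratio of successive errors tends to zero. Your version is in fact slightly more careful than the paper's, since you correctly conclude the limit \emph{equals} zero (the paper writes the nonsensical strict inequality ``$<0$'' for a nonnegative ratio) and you explicitly verify that the step-size window collapses to $\alpha_1(k)=1/L$ so the dropped term remains nonpositive.
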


\begin{example}
An example of a function that meets the conditions required for super linear convergence in Corollary \ref{cor:superlinear} is the quadratic function:
\begin{equation}\label{eqn:superlinear}
    f(x)=0.5x^{T}x.
\end{equation}
in a distributed manner.
\end{example}

Superlinear convergence can be obtained for strongly convex quadratic functions and two-dimensional strictly convex quadratic functions \cite{dai2013new}. However, we illustrate an example of a strongly convex quadratic function that yield a superlinear convergence behavior.  The strong convexity parameter of the cost function in  \eqref{eqn:superlinear} is equal to the Lipschitz parameter of its gradient; that is, $L=\mu =1$. By using this function on the upper bound obtained for rates of convergence as seen in Lemma \ref{lemmaboundsneighborhood1} and Lemma \ref{lemmaboundsneighborhood2}, we have:
\begin{equation*}
    \lim_{k\to\infty}\quad \dfrac{\|x(k+1) - x^*\|}{\|x(k) - x^*\|} <\left(1-\dfrac{c_2}{L}\right)^{\frac{1}{2}},
\end{equation*}
where $c_2=2\mu L/(\mu +L)$.
To obtain superlinear convergence rate, if $L=\mu$, then we obtain:
\begin{equation*}
    \left(1-\dfrac{c_2}{L}\right)=\left(1-\dfrac{2\mu}{2\mu}\right)=0.
\end{equation*}
and we obtain:
\begin{equation*}
\lim_{k\to\infty}\quad \dfrac{\|x(k+1) - x^*\|}{\|x(k) - x^*\|} < 0,
\end{equation*}
and we obtain a superlinear convergence rate.
\section{Numerical Experiments}\label{sec:Numerical}
We show some simulations for results in Lemma \ref{sec:Convanalysiscentralized-BB} (the centralized case) and the results in Theorem \ref{mainresult} (the distributed case). For the centralized, we consider the least square cost function:
\begin{equation}\label{eq:centralized-sim-obj}
    f(x) = \|Ax-b\|_2^{2} ,
\end{equation}
where the matrix $A$ is positive-definite so that $f(x)$ is strongly convex with parameter $\mu$ and $\mu \leq L$, where $L$ is the Lipschitz parameter of the gradient of $f(x)$. We run the simulations for $50$ iterations and compare the BB step size in \eqref{eqn:bb-step} with the gradient method using a decaying step size of $\frac{1}{k}$. In Figure \ref{fig:centralized-simulations}, the label "gradient" is the curve obtained when a step size of $\frac{1}{k}$ is used using a gradient descent algorithm while the label "BB" is the the curve obtained when the actual BB step size in equation \eqref{eqn:bb-step} is used. The results of the simulation of Algorithm \ref{alg:algorithm} are summarized in Figure \ref{fig:centralized-simulations} where the errors as a function of time are illustrated. As can be observed, the error converges to zero indicating the iterates $x(k)$ are approaching the optimal solution $x^*$.

For the distributed case, we consider the following objective function, which is separable per agent:
\begin{equation}\label{leastsquare}
    f(x)=\frac{1}{2}\sum_{i=1}^{n}x^{T}A_i x +b_i^{T}x,
\end{equation}
where $A_i \in \mathbb{R}^{p\times p}$, $b_i \in \mathbb{R}^{p}$ are used by each agent $i$ for their own computation, and $n$ is the number of agents in the network and its dimension is $m=10$. In equation \eqref{leastsquare}, the gradient function is given by:
\begin{equation}\label{leastsquare1}
    \nabla f(x)=\frac{1}{2}(A_i +A_i^{T})x +b_i.
\end{equation}
We note that the function \eqref{leastsquare} is strongly convex and its gradient function in \eqref{leastsquare1} is Lipschitz continuous for an appropriate value of $A$. We verify this through its strong convexity parameters $\mu_i$ and Lipschitz parameter values $L_i$, where $\mu_i \leq L_i$ for each agent $i$ in the network. We also note that $\mu_i$ is the maximum of all the eigenvalues of matrix $(A_i +A_i^{T})$ and $L_i$ is the spectral norm of matrix $(A_i +A_i^{T})$. We will use a scenario where there are $100$ nodes in the network and the matrix $W$ is a positive, symmetric, random doubly stochastic matrix. Our simulations aim to compare different step sizes with the distributed Barzilai-Bowein in equations \eqref{decentralizedstepsize1} and \eqref{decentralizedstepsize2}. Specifically, we use the following step sizes of $\alpha_i=\frac{1}{L_i}$,  $\alpha_i=\frac{1}{\mu_i}$, and $\alpha_i = \frac{2}{L_i + \mu_i}$ according to Lemmas \ref{lemmaboundsneighborhood1} and \ref{lemmaboundsneighborhood2}, and the BB step size as seen in equation \eqref{decentralizedstepsize1}. In Figure \ref{fig:distributedsimulations}, the label $c_1=\frac{2}{\mu +L}$ bound curve (the circular curve) is the step size according to convergence result in Lemma \ref{lemmaboundsneighborhood1}. The label BB step size in figure \ref{fig:distributedsimulations} (the curve beneath all other curves) is the actual BB step size in equation \eqref{decentralizedstepsize1} and the labels BB-Upperbound step size curve (the one in asterisk) and BB-Lowerbound step size curve ( triangular) are the lower and upper bounds of the BB step sizes ($\alpha_i=\frac{1}{L_i}$,  $\alpha_i=\frac{1}{\mu_i}$) respectively. Our simulations affirm that the step size $\alpha_i = \frac{2}{L_i + \mu_i}$ lies in between the lower and upper bounds of the BB step size and also agrees with the theoretical result in Corollary \ref{cor:cbb1}. We apply these step sizes to the iteration  shown in equation \eqref{localoptimizationstep} to compare the rates at which each step size converges to the optimal point. By setting the gradient function in equation \eqref{leastsquare1}, to zero, we obtain the optimal solution $x^*$ given by the following equation:
\begin{equation}\label{optimalsolutionequation}
    x^*=-2(A_i+A_i^{T})^{-1}b.
\end{equation}
The optimal solution in equation \eqref{optimalsolutionequation} is then obtained for the three different step sizes we used for simulation. Though convergence is attained for the three step sizes used, we run the simulations for $50$ iterations to compare convergence speeds. We compare convergence rates by first initializing $x$, $A$ and $b$ as zeros between time step $k=1$ to the total number of iterations $T=50$. We also initialize the values of $L$, $\mu$ and the three forms of the step sizes as zeros and run the simulation for $1000$ iterations. We plot the error curve for the three step sizes indicated by the expression $\|\overline{x}(k+1)-x^*\|$ and compare the results.
Our Numerical results are shown graphically below in Figures \ref{fig:centralized-simulations} and \ref{fig:distributedsimulations}:
\begin{figure}[h]
    \centering
    \includegraphics[width=8cm]{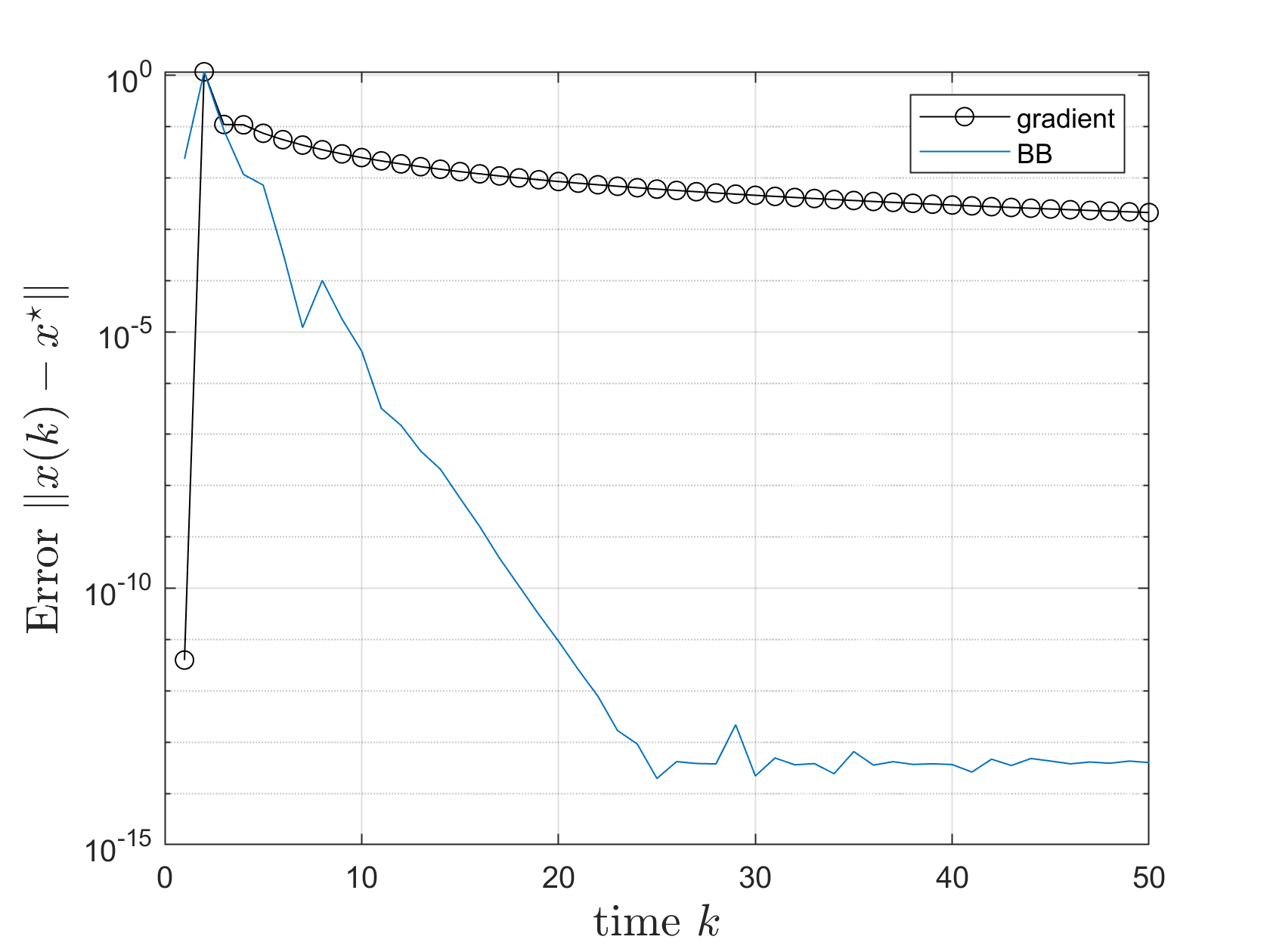}
    \caption{Centralized Simulations for $50$ iterations}
    \label{fig:centralized-simulations}
\end{figure}

\begin{figure}[h]\label{Simulations-Distributed}
    \centering
    \includegraphics[width=8cm]{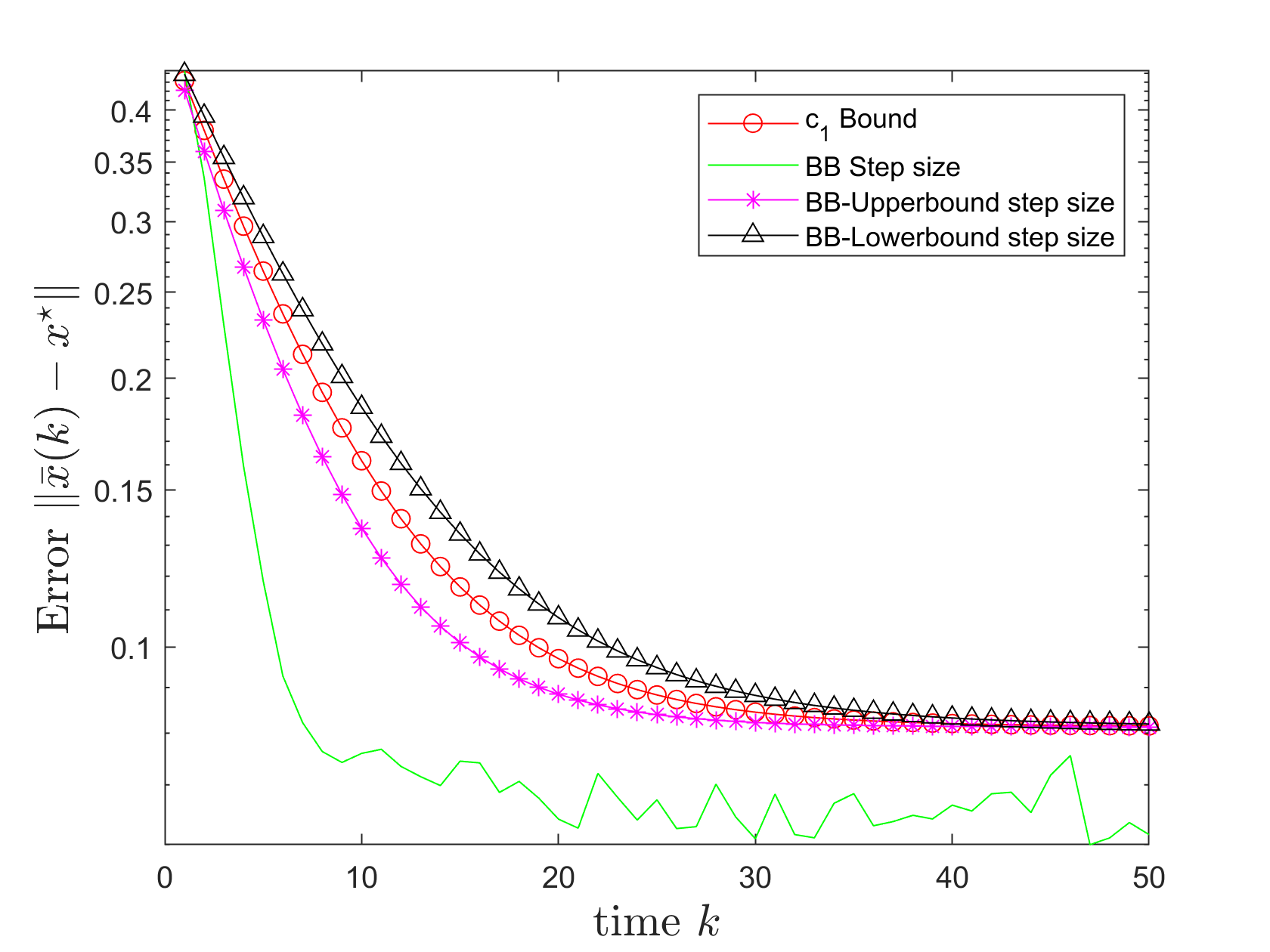}
    \caption{Distributed Simulations for $50$ iterations}
    \label{fig:distributedsimulations}
\end{figure}

\section{Conclusions}
\label{sec:conclude}
We examined the convergence attributes of an unconstrained problem and we obtained Q-linear convergence for both the centralized and distributed case by assuming strong convexity on the objective function. However, because this is a consensus-type scenario, the Q-linear convergence rate holds when the agents in a network locally agree to an average, $\overline{x}(k)$ for both the centralized and distributed cases. We also compared the speed of convergence for the gradient method with the BB method by using the two Barzilai-Borwein step sizes, and the BB method converges to the optimal point at a faster than the gradient method. In section \ref{sec:Numerical}, the faster convergence of the BB method is evident because of its minimal error in getting to the optimal point than in the gradient method after some iterations. 
  
  
\bibliographystyle{IEEEtran}
{\small
\bibliography{dbb-ref-1}}

\appendix
\subsection{Proof of Corollary \ref{cor:cbb1}}\label{Proof of Cor No11} 
\begin{proof}
We establish equation \eqref{cbb-bound} in a manner that the range of step size bounds below holds: 
\begin{equation*}
\frac{1}{L}\leq \alpha_{i1}(k) \leq \frac{2}{\mu +L}\leq \frac{1}{\mu}.
\end{equation*}
We also note that the step size range also applied to both the centralized and distributed form of the step sizes.
First, according to \cite{gao2019geometric}, we first start by noting that the BB step size can be upper and lower bounded according to:
\begin{equation*}
\frac{1}{L}\leq \alpha_{i1}(k) \leq \frac{1}{\mu}.
\end{equation*}
To include $2/(\mu +L)$  between the first distributed step size, $\alpha_{i1}(k)$ and $1/\mu$, we prove that $2/(\mu +L)\leq 1/\mu$ and $2/(\mu +L)\geq 1/L$. 

To prove that $2/(\mu +L)\leq 1/\mu$, we show that $\frac{1}{\mu}-\frac{2}{\mu +L}>0$.
\begin{equation*}
    \frac{1}{\mu}-\frac{2}{\mu +L}=\frac{L-\mu}{\mu (\mu +L)}.
\end{equation*}
We know that $L\geq \mu$ and $L$ and $\mu$ are positive, then we obtain that $1/\mu-2/(\mu +L)>0$.

Now we will prove that $2/(\mu +L)\geq 1/L$. below:
\begin{equation*}
    \frac{2}{\mu +L}-\frac{1}{L}=\frac{L-\mu}{L(L+\mu)}.
\end{equation*}
Since $L\geq \mu$ and $L$ and $\mu$ are both positive, we have that $2/(\mu +L)\geq 1/L$. Therefore we obtain the bounds:
\begin{equation*}
\frac{1}{L}\leq \alpha_{i1}(k) \leq \frac{2}{\mu +L}\leq \frac{1}{\mu}.
\end{equation*}
So we conclude that the step size $\alpha_{i1}(k)=2/(\mu +L)$ lies between the lower ($1/L $) and lower bounds ($1/\mu )$ of the BB step sizes. The fact that $\overline{\alpha_{i1}}(k)=\frac{1}{n}\sum\limits_{i=1}^n \alpha_{i1}(k)$ and $\overline{\alpha_{i1}}(k)\leq \frac{1}{\mu}$ hold completes the proof.
\end{proof}
\subsection{Proof of Lemma \ref{lemmacentralized2}}\label{Proof of Lemma No2}
\begin{proof}
From Equation (\ref{eqn:gradient}), we first consider $\|x(k+1)-x^*\|^2$ to obtain bounds for convergence. First, we let $g(k)=\nabla f(x(k))$ and we obtain that:
\begin{equation*}
  \|x(k+1)-x^*\|=\|x(k)-x^*-\alpha_2(k)g(k)\|.
\end{equation*}
By squaring both sides, we have:
\begin{equation}
\begin{aligned}
\|x(k) - x^*-\alpha_2(k)g(k)\|^2 ={}  &\|x(k) - x^*\|^2 + \alpha_2^2(k)\|g(k)\|^2, \\
 & -2\left(x(k){-}x^*\right)^T\left(\alpha_2(k) g(k)\right).
\end{aligned}
\end{equation}
For all vectors $a,\ b$, $2a^{T}b\leq \|a\|^2+\|b\|^2$.
We now have the relationship:
\begin{equation*}
2\left(x(k) {-} x^*\right)^T\left(g(k)\right) \leq \|g(k)\|^2 {+} \|x(k) {-} x^*\|^2.
\end{equation*}
By strong convexity assumption with parameter $\mu$, Lipschitz constant $L$, and constants $c_1$, $c_2$ expressed as $c_1=2/(\mu +L)$ and $c_2=2\mu L/(\mu +L)$, we obtain:
\begin{equation}
\begin{aligned}
\|x(k) {-} x^*{-}\alpha_2(k)g(k)\|^2 \leq{}  &\|x(k) {-} x^*\|^2 + \alpha_2^2(k)\|g(k)\|^2 \\
 & -\alpha_2(k)c_1\|g(k)\|^2 \\
 & - \alpha_2(k)c_2\|x(k) - x^*\|^2, \\
 \leq  &(1-\alpha_2(k)c_2)\|x(k) - x^*\|^2\\ 
 &+(\alpha_2^2(k)-\alpha_2(k)c_2)\|g(k)\|^2,\\
 \leq  &(1-\alpha_2(k)c_2)\|x(k) - x^*\|^2.
 \label{qlinearrate2}
\end{aligned}
\end{equation}
where the last inequality is due to Theorem $2.1.12$ from chapter $2$ of \cite{nesterov1998introductory} and the term $(\alpha_2^2(k)-\alpha_2(k)c_2)\|g(k)\|^2\leq 0$ provided $\alpha_2(k)\leq c_1$.
According to the verification as seen in a similar manner in the proof of corollary  \ref{Proof of Cor No11}, the second step size bounds satisfies:
\begin{equation*}
\frac{1}{L}\leq \alpha_2(k) \leq \frac{2}{\mu +L}\leq \frac{1}{\mu}.
\end{equation*}
Therefore we have:
\begin{eqnarray}
\|x(k+1) - x^*\|^2 &\leq& \left(1-\alpha_2(k)c_2\right)\|x(k) - x^*\|^2,\nonumber\\
 \dfrac{\|x(k+1) - x^*\|^2}{\|x(k) - x^*\|^2} &\leq& 1-\alpha_2(k)c_2,\nonumber\\
\text{and} \quad \dfrac{\|x(k+1) - x^*\|}{\|x(k) - x^*\|} &\leq& \left(1-\alpha_2(k)c_2\right)^{\frac{1}{2}}. \nonumber
\end{eqnarray}
Now we analyse the right hand side of the above equation by bounding $\left(1-\alpha_2(k)c_2\right)^{\frac{1}{2}}$. The second Barzilai-Borwein step size $\alpha_{2}(k)$ is given by:
\begin{eqnarray}
 \alpha_2(k)=\frac{s(k-1)^{T} y(k-1)}{y(k-1)^T y(k-1)}.
\end{eqnarray}
By using Lipschitz continuity of $\nabla f(\cdot)$, with $L$ as the Lipschitz constant, we obtain according to \cite{gao2019geometric} that the second BB step size is lower bounded by $1/L$.
If $\alpha_2(k)$ and $c_2$ are positive and $\alpha_2(k) > 1/L$, then
 $ -\alpha_2(k)c_2 < -c_2/L$.
 \noindent Since $\alpha_2(k) > 1/L$,
then 
$$ 0 < 1-\alpha_2(k)c_2 < 1-\dfrac{c_2}{L}, \quad \text{which implies that}$$
 \[ 
 0< \left(1-\alpha_2(k)c_2\right)^{\frac{1}{2}} < \left(1-\dfrac{c_2}{L}\right)^{\frac{1}{2}}.
\]
To prove that $c_2/L<1$, if $c_2=2\mu L/(\mu +L)$, then we obtain the fact that $c_2/L=2\mu /(\mu +L)$.
If $\mu < L$, it implies that $\mu +\mu < L+\mu$ and we obtain $2\mu/(\mu +L)<1$.
Therefore we have the following relationship: $$\lim_{k\to\infty}\quad \dfrac{\|x(k+1) - x^*\|}{\|x(k) - x^*\|} <\left(1-\dfrac{c_2}{L}\right)^{\frac{1}{2}}< 1,$$
from which we conclude that the iterates $x(k)$ converge $Q$-linearly to the optimal point, $x^*$.
\end{proof}
\subsection{Proof of Lemma \ref{lemmabounds1}}\label{Proof of Lemma 4}
\begin{proof}
We first consider the first step size $\alpha_{i1}$ as expressed in equation \eqref{decentralizedstepsize1}, and 
for notation simplicity, we denote $ Z = W \otimes I_p \in \mathbb{R}^{np\times np}$.
Then the distributed iteration at time step $k$ is given by:
\begin{equation}
X(k+1) =  ZX(k) - \alpha_{i1}(k)\nabla f(X(k)).
\label{distributedstepsize1}
\end{equation}
where $\otimes$ is the kronecker product.
From the definitions of $\overline{x}(k)$ as the average of local estimates, we have the expression:
$\overline{x}(k) = \frac{1}{n}\sum\limits_{i=1}^n x_i(k).$ Likewise, from the definition of $g(x(k))$ as the average of local gradients, we obtain the relationship:
 $g(x(k)) = \frac{1}{n}\sum\limits_{i=1}^n \nabla f_i\left(x_i(k)\right).$
If we solve for $X(k)$ in equation (\ref{distributedstepsize1})   where $\alpha_{i1}$ is the BB step size, we obtain the expression:
\begin{equation}\label{bigconcatenation}
X(k) = -\sum_{m=0}^{k-1} \alpha_1(m) \left(W^{(k-1-m)}\otimes I\right)\nabla f\left(x(m)\right).
\end{equation}
Suppose $\overline{X}(k)$ is the average of all concatenated $x_i(k)$, 
then we obtain
\begin{equation}
\label{averageequation}
\overline{X}(k) = \dfrac{1}{n}\left(\left(1_n 1_n^T\right) \otimes I\right)x(k).
\end{equation}
Also, we know that the following relationship:
\begin{equation}
\|x_i(k) - \overline{x}(k)\| \leq \|X(k) - \overline{X}(k)\|.
\label{smallandbig}
\end{equation}
where the distributed form of the first BB step size is given in $\alpha_{i1}$ as expressed in equation \eqref{decentralizedstepsize1}.
From equations \eqref{bigconcatenation} and \eqref{averageequation}, we obtain:
\begin{equation}
\begin{aligned}
\|x_i(k) - \overline{x}(k)\| 
 \leq & \|X(k) - \overline{X}(k)\|, \\
= & \|X(k) -\frac{1}{n}\left(\left(1_n1_n^T\right)\otimes I\right)X(k)\|,\\
 = & \|-\sum\limits_{m=0}^{k{-}1}\alpha_{i1}(m)(W^{(k{-}1{-}m)}\otimes I)\nabla f(x(m)) \\
  & +\frac{1}{n}((1_n1_n^T)\otimes I)\sum_{m{=}0}^{k{-}1}(\alpha_{i1}(m) \\
 & (W^{(k{-}1{-}m)}\otimes I)\nabla f(x(m)))\|, \\
 = & \|-\sum\limits_{m=0}^{k{-}1}(\alpha_{i1}(m)(W^{(k{-}1{-}m)}\otimes I)\nabla f(x(m))) \\ 
  &  +\sum\limits_{m=0}^{k-1}\frac{\alpha_{i1}(m)}{n} ((1_n1_n^T)\otimes I)\nabla f(x(m))\|.
\end{aligned}
\end{equation} 
Because $W$ is doubly stochastic, then we have the relationship:
\begin{equation*}
\begin{aligned}
\|x_i(k) - \overline{x}(k)\| 
\leq& -\sum_{m=0}^{k-1}\| \alpha_{i1}(m)((W^{(k-1-m)}-\frac{1}{n}1_n1_n^T)\\
& \otimes I)\nabla f(x(m)).\|\nonumber
\end{aligned}
\end{equation*}
If 
$\alpha_{i1}(k) = \frac{s_i(k-1)^Ts_i(k-1)}{s_i(k-1)^Ty_i(k-1)}$,
then
$\alpha_{i1}(m) = \frac{s_i(m-1)^Ts_i(m-1)}{s_i(m-1)^Ty_i(m-1)}$.
Therefore we have the expression:
\begin{eqnarray}
\|x_i(k) - \overline{x}(k)\| & \leq & -\sum_{m=0}^{k-1}\frac{\|s_i(m-1)\|^2}{s_i(m-1)^Ty_i(m-1)}.\nonumber\\
&& \|W^{(k-1-m)} -  \frac{1}{n}1_n1_n^T\|\|\nabla f(x(m))\|.\nonumber
\end{eqnarray}
So we obtain that the expression: \\$\|x_i(k) - \overline{x}(k)\|$.
\begin{eqnarray}
\leq  \sum_{m=0}^{k-1} \frac{\|s_i(m-1)\|^2}{s_i(m-1)^Ty_i(m-1)}\lambda^{(k-1-m)}\|\nabla f(x(m)).\|\nonumber
\end{eqnarray}
If $\nabla f(x(m))$ is bounded meaning that $\|\nabla f(x(m)\| \leq G$ where $G$ is positive, then we have:
\begin{equation}
\|x_i(k) - \overline{x}(k)\|\leq \sum_{m=0}^{k-1} \frac{G\|s_i(m-1)\|^2}{s_i(m-1)^Ty_i(m-1)}\lambda^{(k-1-m)}.
\label{distriteration}
\end{equation}
The eigenvalues $\lambda$ of the weight matrix $W$ satisfies the bounds, $0<\lambda\leq1$.
From equation (\ref{distriteration}) and by Cauchy-Schwarz on sums, we obtain:
\begin{eqnarray}
\|x_i(k){-} \overline{x}(k)\| \leq G\left(\sum_{m=0}^{k-1}\alpha_{i1}^2(m)\right)^{\frac{1}{2}}\left(\sum_{m=0}^{k-1}\lambda^{2(k-1-m)}\right)^{\frac{1}{2}}.
\label{disiteration2}
\end{eqnarray}
We know that the BB step size is upper bounded such that $\alpha_i < 1/\mu$. \\
Then by squaring both sides, $\alpha_i^2 \leq \frac{1}{\mu^2}$ and we obtain the relationship $\sum_{m=0}^k \alpha_i^2(m) \leq \frac{k}{\mu^2}$ and we obtain the result $\left(\sum_{m=0}^k \alpha_i^2(m) \right)^{\frac{1}{2}} \leq \frac{\sqrt{k}}{\mu}$.

\noindent In equation (\ref{disiteration2}),
\begin{eqnarray}
\|x_i(k) - \overline{x}(k)\| \leq G \left(\sum_{m=0}^{k-1}\alpha_{i1}^2(m)\right)^{\frac{1}{2}}\left(\sum_{m=0}^{k-1}\lambda^{2(k-1-m)}\right)^{\frac{1}{2}}
\end{eqnarray}
where
\[
\left(\sum_{m=0}^k \alpha_i^2(m) \right)^{\frac{1}{2}} \leq \frac{\sqrt{k}}{\mu}
\]

and
\[\left(\sum_{m=0}^{k-1}\lambda^{2(k-1-m)}\right)^{\frac{1}{2}} \leq \left(\dfrac{1}{1-\lambda^{2}}\right)^{\frac{1}{2}} = \dfrac{1}{\sqrt{1-\lambda^{2}}} \triangleq\text{Q}_3.\]
Moreover if  
\begin{equation*}
    \sum_{m{=}0}^{k{-}1} \alpha_{i1}^2(m)\leq \frac{1}{G^2 \sum_{m=0}^{k-1}\lambda^{2(k-1-m)}},
\end{equation*}
then each local agent's estimates converges Q-linearly to its average; that is $\|x_i(k) - \overline{x}(k)\| \leq 1.$

\end{proof}

\subsection{Proof of Lemma \ref{lemmabounds2}}\label{Proof of Lemma 6}
\begin{proof}
We consider the second step size $\alpha_{i2}$ as expressed in equation \eqref{decentralizedstepsize2}, and 
for notation simplicity, we denote $ Z = W \otimes I_p \in \mathbb{R}^{np\times np}$.
Then the distributed iteration at time step $k$ is given by:
\begin{equation}
X(k+1) =  ZX(k) - \alpha_{i2}(k)\nabla f(X(k)).
\label{distributedstepsize2}
\end{equation}
Similarly based on the definition of $\overline{x}(k)$ as the average of local estimates, then we have:
\begin{equation*}
   \overline{x}(k) = \frac{1}{n}\sum\limits_{i=1}^n x_i(k) .
\end{equation*}
Because $g(x(k))$ to be the average of local gradients, So we obtain the expression:
\begin{equation*}
 g(x(k)) = \frac{1}{n}\sum\limits_{i=1}^n \nabla f_i\left(x_i(k)\right).  
\end{equation*}
If we solve equation (\ref{distributedstepsize2}) for $X(k)$ where $\alpha_{i2}$ is the second BB step size, we obtain the expression:
\begin{equation}
X(k) = -\sum_{m=0}^{k-1} \alpha_2(m) \left(W^{(k-1-m)}\otimes I\right)\nabla f\left(x(m)\right).
\label{eq2}
\end{equation}
Let $\overline{X}(k)$ be the average of all concatenated $x_i(k)$, then we obtain:
\begin{equation}
\label{averageofbigX2}
\overline{X}(k) = \dfrac{1}{n}\left(\left(1_n 1_n^T\right) \otimes I\right)x(k).
\end{equation}
Also, we know that the following relationship holds.
\begin{equation}
\|x_i(k) - \overline{x}(k)\| \leq \|X(k) - \overline{X}(k)\|.
\label{eq3}
\end{equation}
We note that in equation (\ref{eq2}), $\alpha_{i2}$ is expressed in equation \eqref{decentralizedstepsize2}.
From equations (\ref{averageofbigX2}) and (\ref{eq3}), we get:
\begin{equation}
\begin{aligned}
\|x_i(k) - \overline{x}(k)\| 
 \leq & \|X(k) - \overline{X}(k)\|, \\
= & \|X(k) -\frac{1}{n}\left(\left(1_n1_n^T\right)\otimes I\right)X(k)\|,\\
 = & \|-\sum\limits_{m=0}^{k{-}1}\alpha_{i2}(m)(W^{(k{-}1{-}m)}\otimes I)\nabla f(x(m)) \\
  & +\frac{1}{n}((1_n1_n^T)\otimes I)\sum_{m{=}0}^{k{-}1}(\alpha_{i2}(m) \\
 & (W^{(k{-}1{-}m)}\otimes I)\nabla f(x(m)))\|, \\
 = & \|-\sum\limits_{m=0}^{k{-}1}(\alpha_{i2}(m)(W^{(k{-}1{-}m)}\otimes I)\nabla f(x(m))) \\ 
  &  +\sum\limits_{m=0}^{k-1}\frac{\alpha_{i2}(m)}{n} ((1_n1_n^T)\otimes I)\nabla f(x(m))\|.
\end{aligned}
\end{equation}
Because $W$ is doubly stochastic, then
\begin{equation*}
\begin{aligned}
\|x_i(k) - \overline{x}(k)\| 
\leq& -\sum_{m=0}^{k-1}\| \alpha_{i2}(m)((W^{(k-1-m)}-\frac{1}{n}1_n1_n^T)\\
& \otimes I)\nabla f(x(m))\|.\nonumber
\end{aligned}
\end{equation*}
If the second BB step size is given by:
\[\alpha_{i2}(k) = \frac{s_i(k-1)^Ty_i(k-1)}{y_i(k-1)^Ty_i(k-1)},\]
then we have the relationship:
\[\alpha_{i2}(m) = \frac{s_i(m-1)^Ty_i(m-1)}{y_i(m-1)^Ty_i(m-1)}.\]
\begin{eqnarray}
\|x_i(k) - \overline{x}(k)\| & \leq & -\sum_{m=0}^{k-1}\frac{s_i(m-1)^Ty_i(m-1)}{\|y_i(m-1)\|^2}\nonumber\\
&& \|W^{(k-1-m)} -  \frac{1}{n}1_n1_n^T\|\|\nabla f(x(m))\|.\nonumber
\end{eqnarray}
We now obtain the bound:\\ $\|x_i(k) - \overline{x}(k)\|$:
\begin{eqnarray}
\leq  \sum_{m=0}^{k-1} \frac{s_i(m-1)^Ty_i(m-1)}{\|y_i(m-1)\|^2}\lambda^{(k-1-m)}\|\nabla f(x(m))\|.\nonumber
\end{eqnarray}
If $\nabla f(x(m))$ is bounded meaning that $\|\nabla f(x(m))\| \leq G$ where $G$ is positive, then we obtain:
\begin{equation}
\|x_i(k) - \overline{x}(k)\|\leq \sum_{m=0}^{k-1} \frac {s_i(m-1)^Ty_i(m-1)}{\|y_i(m-1)\|^2}\lambda^{(k-1-m)}G.
\label{eq5}
\end{equation}
and $\lambda$ is the second largest eigenvalue of W. We also note that the weight matrix $W$ satisfies the inequality, $0<\lambda\leq 1$.
From equation (\ref{eq5}) and by Cauchy-Schwarz on sums, we obtain:
\begin{eqnarray}
\|x_i(k) - \overline{x}(k)\| \leq G\left(\sum_{m=0}^{k-1}\alpha_{i2}^2(m)\right)^{\frac{1}{2}}\left(\sum_{m=0}^{k-1}\lambda^{2(k-1-m)}\right)^{\frac{1}{2}}.
\label{eq6}
\end{eqnarray}
We know that the second BB step size is upper bounded such that $\alpha_i < \frac{1}{\mu}$. \\
By squaring both sides, $\alpha_{i2}^2 \leq \frac{1}{\mu^2}$ and we obtain the relationship $\sum_{m=0}^k \alpha_i^2(m) \leq \frac{k}{\mu^2}$ and we obtain the result $\left(\sum_{m=0}^k \alpha_{i2}^2(m) \right)^{\frac{1}{2}} \leq \frac{\sqrt{k}}{\mu}$.

In equation (\ref{disiteration2}),
\begin{eqnarray}
\|x_i(k) - \overline{x}(k)\| \leq G \left(\sum_{m=0}^{k-1}\alpha_{i2}^2(m)\right)^{\frac{1}{2}}\left(\sum_{m=0}^{k-1}\lambda^{2(k-1-m)}\right)^{\frac{1}{2}},
\end{eqnarray}
where
\[
\left(\sum_{m=0}^k \alpha_{i2}^2(m) \right)^{\frac{1}{2}} \leq \frac{\sqrt{k}}{\mu},
\]

and
\[\left(\sum_{m=0}^{k-1}\lambda^{2(k-1-m)}\right)^{\frac{1}{2}} \leq \left(\dfrac{1}{1-\lambda^{2}}\right)^{\frac{1}{2}} = \dfrac{1}{\sqrt{1-\lambda^{2}}} \triangleq\text{Q}_3.\]
\end{proof}

\subsection{Proof of Lemma \ref{lemmaboundsneighborhood2}}\label{secondstepsizeconvproof}
\begin{proof}
Let the variables $\overline{x}(k)$, $g(k)$, and   $\overline{\alpha_{i2}}(k)$ be defined just as they were in Lemma \ref{lemmaboundsneighborhood1} except that $\overline{\alpha_{i2}}(k)$ is now the average of the second distributed BB step size. We first consider $\|\overline{x}(k+1)-x^*\|^2$ to obtain bounds for convergence. So we start with the iterate:
\begin{equation*}
  \|\overline{x}(k+1)-x^*\|=\|\overline{x}(k)-x^*-\overline{\alpha_{i2}}g(k)\|.
\end{equation*}
 Squaring both sides and simplifying the right hand side of the above equation yields:
\begin{equation}
\begin{aligned}
\|x(k) {-} x^* {-}\overline{\alpha_{i2}}(k)g(k)\|^2 =& \|x(k) {-} x^*\|^2 + \overline{\alpha_{i2}}^2(k)\|g(k)\|^2 \\
 & -2\left(x(k){-}x^*\right)^T\left(\overline{\alpha_{i2}}(k) g(k)\right).
\end{aligned}
\end{equation}
By using vector norm principles,  for vectors $a,\ b, \ 2a^{T}b\leq \|a\|^2+\|b\|^2$, therefore we obtain:
\begin{equation*}
2\left(\overline{x}(k)- x^*\right)^T\left(g(k)\right) \leq \|g(k)\|^2 + \|\overline{x}(k) - x^*\|^2.
\end{equation*}
Just as was defined in Lemma \ref{lemmaboundsneighborhood1}, $\mu$ and $L$ are strong convexity and Lipschitz parameters respectively and $c_1$, $c_2$ are given by $c_1=2/(\mu +L)$ and $c_2=2\mu L/(\mu +L)$. Therefore we have the relationship:
\begin{equation}
\begin{aligned}
\|\overline{x}(k{+}1) {-} x^*{-}\overline{\alpha_{i2}}(k)g(k)\|^2 
 \leq & \|\overline{x}(k){-} x^*\|^2 {+} \overline{\alpha_{i2}}^2(k)\|g(k)\|^2 \\
 & -\overline{\alpha_{i2}}(k)c_1\|g(k)\|^2 \\&- \overline{\alpha_{i2}}(k)c_2\|\overline{x}(k) {-} x^*\|^2, \\
  \leq  & (1-\overline{\alpha_{i2}}(k)c_2)\|\overline{x}(k) {-} x^*\|^2\\ &+(\overline{\alpha_{i2}}^2(k)-\overline{\alpha_{i2}}(k)c_1)\|g(k)\|^2,\\
 \leq & (1-\overline{\alpha_{i2}}(k)c_2)\|\overline{x}(k) {-} x^*\|^2.
 \label{qdistributedrate22}
\end{aligned}
\end{equation}
We note that the last inequality is due to Theorem $2.1.12$ from chapter $2$ of \cite{nesterov1998introductory}. We also note that  $(\overline{\alpha_{i2}}^2(k)-\overline{\alpha_{i2}}(k)c_1)\|g(k)\|^2\leq 0$ provided $\overline{\alpha_{i2}}(k)\leq c_1$. We also note that $\overline{\alpha_{i2}}(k)=c_1$ is within the range of the BB step size bounds below and the details are shown in Appendix \ref{Proof of Cor No11}:

Therefore the distributed BB convergence using the second BB step size can be finalized as:

\begin{eqnarray}
\|\overline{x}(k+1) - x^*\|^2 &\leq& \left(1-\overline{\alpha_{i2}}(k)c_2\right)\|\overline{x}(k) - x^*\|^2,\nonumber\\
 \dfrac{\|\overline{x}(k+1) - x^*\|^2}{\|\overline{x}(k) - x^*\|^2} &\leq& 1-\overline{\alpha_{i2}}(k)c_2,\nonumber\\
\text{and} \quad \dfrac{\|\overline{x}(k+1) - x^*\|}{\|\overline{x}(k) - x^*\|} &\leq& \left(1-\overline{\alpha_{i2}}(k)c_2\right)^{\frac{1}{2}}. \nonumber
\end{eqnarray}
 We will now bound: $\left(1-\overline{\alpha_{i2}}(k)c_2\right)^{\frac{1}{2}}$ where $\alpha_{i2}(k)$ is expressed in equation \eqref{decentralizedstepsize2} and $\overline{\alpha_{i2}}(k)=\frac{1}{n}\sum\limits_{i=1}^n \alpha_{i2}(k)$. 
 By using Lipschitz continuity of $\nabla f(\cdot)$ with $L$ as the Lipschitz constant, the distributed form of the second BB step size is lower bounded by $\frac{1}{L}$.
Now, $\overline{\alpha_{i2}}(k)=\frac{1}{n}\sum\limits_{i=1}^n \alpha_{i2}(k)$, and it results to $n\overline{\alpha_{i2}}(k)=\sum\limits_{i=1}^n \alpha_{i2}(k)$. However, $\alpha_{i2}(k)>\frac{1}{L}$. We know that $\alpha_{i2}(k)<\sum\limits_{i=1}^n \alpha_{i2}(k)$ and we obtain the result:
\begin{equation}\label{sum-step-size-bound-2}
    \frac{1}{L}<\alpha_{i2}(k)<\sum\limits_{i=1}^n \alpha_{i2}(k).
\end{equation}
From equation \eqref{sum-step-size-bound-2}, $n\overline{\alpha_{i2}}(k)=\sum\limits_{i=1}^n \alpha_{i2}(k)>\frac{1}{L}$ and we have $\overline{\alpha_{i2}}(k)>\frac{1}{nL}$.

If $\overline{\alpha_{i2}}(k)$ and $c_2$ are positive and $\overline{\alpha_{i2}}(k) > 1/nL$, then
 $ -\overline{\alpha_{i2}}(k)c_2 < -c_2/nL$.
  Therefore, $\overline{\alpha_{i2}}(k) > 1/nL$.
it implies that $ 0 < 1-\overline{\alpha_{i2}}(k)c_2 < 1-c_2/nL$,
So we obtain the bounds:
\begin{eqnarray}
 0 < \left(1-\overline{\alpha_{i2}}(k)c_2\right)^{\frac{1}{2}} < \left(1-\dfrac{c_2}{nL}\right)^{\frac{1}{2}}.\nonumber
\end{eqnarray}
It has been established in Lemma \ref{lemmaboundsneighborhood1} that $c_2/nL\leq1$ for all positive values of $n$.
Therefore, $$\lim_{k\to\infty}\quad \dfrac{\|\overline{x}(k+1) - x^*\|}{\|\overline{x}(k) - x^*\|} \leq\left(1-\dfrac{c_2}{nL}\right)^{\frac{1}{2}}\leq 1,$$
So the average of the estimates converges Q-linearly to the optimal point, $x^*$.
\end{proof}
\end{document}